\documentclass{amsart}
\usepackage{amssymb}
\usepackage[all, cmtip]{xy}
\usepackage[hyphens]{url} \urlstyle{same}
\usepackage[utf8]{inputenc}
\usepackage{amsmath}
\usepackage{amsfonts}
\usepackage{amssymb}
\usepackage{enumerate}
\usepackage{amsthm}
\usepackage{centernot}
\usepackage{tikz, pgfplots, xcolor, fancyhdr,multicol}
\usepackage[colorlinks,citecolor=blue,urlcolor=blue,bookmarks=false,hypertexnames=true]{hyperref} 

\def\s{{\sigma}}
\def\o{{\omega}}
\def\N{{\mathbb N}}
\def\R{{\mathbb R}}
\def\V{{\mathcal{V}}}
\def\P{{\mathcal{P}}}

\def\E{{\mathcal{E}}}
\def\A{{\mathcal{A}}}
\usepackage{graphicx} 
\graphicspath{{figures/}{C:\Users\grigo\Desktop\weighted inequalities/}}
\newif\ifgrading

\gradingfalse

\newcommand{\RC}[1]{\ifgrading {{\color{blue} #1 }} \fi}

\theoremstyle{plain}
 \newtheorem{thm}{Theorem}
 
 \newtheorem{lem}{Lemma}
 \newtheorem{cor}{Corollary}
 \newtheorem{question}{Question}
\theoremstyle{definition}

\theoremstyle{remark}
 \newtheorem{rem}{Remark}[section]
 
 \numberwithin{equation}{section}

\renewcommand{\leq}{\leqslant}
\renewcommand{\geq}{\geqslant}

\setlength{\textwidth}{28cc} \setlength{\textheight}{42cc}
\DeclareMathOperator{\dist}{dist}

\title[Necessary/sufficient conditions in weighted theory]{The necessary/sufficient conditions in weighted theory}

\subjclass[2010]{Primary 42B20}


\keywords{weighted inequalities, $\A_p$ condition, $C_p$ condition, Pivotal, Energy}

\author[Grigoriadis]{\bfseries  Christos Grigoriadis} 

\address{ 
Department of Mathematics \\ 
Michigan State University   \\ 
East Lansing\\
US}
\email{grigori4@math.msu.edu}


\pgfplotsset{compat=1.14}

\begin{document}

\maketitle

\begin{abstract}
 We provide  an essentially complete dictionary of all implications among the basic and fundamental conditions in weighted theory such as the doubling, one weight $A_p(w)$, $A_\infty$ and $C_p$ conditions as well as the two weight $\A_p(\o,\s)$ and the ``buffer" Energy and Pivotal conditions. The most notable implication is that in the case of $A_\infty$ weights the two weight $\A_p$ condition implies the $p-$Pivotal condition hence giving an elegant and short proof of the known NTV-conjecture with $p=2$ for $A_\infty$ weights in terms of existing T1 theory. We also provide a quite technical construction inspired by \cite{GaKS} proving that we can have  doubling weights satisfying the $C_p$ condition which are not in $A_\infty$.
\end{abstract}

\section{Introduction}

Given two locally finite positive Borel measures $\o,\s$ in $\R^n$, the two weight problem for an operator $T$ is to characterize $\o,\s$ so that  

\begin{equation}
\label{main}
||T(fd\s)||_{L^p(\o)} \lesssim ||f||_{L^p(\s)}, \quad \forall f \in L^p(\s),\;\; p>1.
\end{equation}

\subsection{One weight theory}

\eqref{main} is a generalization of the one weight inequality for the Hilbert transform  where $T=H$, $d\o(x)=w(x)dx$, $d\s(x)=w(x)^{1-p\prime}dx$ and $f\in L^p(w)$
\begin{equation}\label{Hilbert one weight}
||Hf||_{L^p(\o)} \lesssim ||f||_{L^p(\o)}
\end{equation}
which was shown by Hunt, Muckenhoupt and Wheeden \cite{HMW} to be equivalent to the finiteness of the  Muckenhoupt one weight $A_p$ condition, namely $\o$ has to be absolutely continuous to Lebesgue measure $d\o =w(x)dx$ and
\begin{equation}
\label{one weight Ap}
A_p(w)=\underset{I}{\sup}\frac{1}{|I|}\int_I w(x)dx\left( \frac{1}{|I|}\int_Iw(x)^{\frac{1}{1-p}} dx\right)^{p-1}\leq C<\infty
\end{equation}
where the supremum is taken uniformly over all cubes in $\R^n$. There has been a huge amount of work in harmonic analysis and boundary value problems around the $A_p$ condition, check Stein \cite{St}, Duoandikoetxea \cite{DJ}, Garnett \cite{Ga} and references there.

Coifman and Fefferman in \cite{CoFe} proved \eqref{Hilbert one weight} using the following inequality, which holds for any $w \in A_\infty=\bigcup_{p\geq 1}A_p$,
\begin{equation}\label{Hilbert maximal}
\int_{\R^n}|Tf(x)|^pw(x)dx\leq C\int_{\R^n}|Mf(x)|^pw(x)dx
\end{equation}
where $T$ is any singular integral operator and $f$ is bounded and compactly supported. We can extend to any locally integrable $f$ for which the right hand side is finite (since otherwise there is nothing to prove) using the dominated convergence theorem.

Muckenhoupt in \cite{Mu} proved, for $n=1$, that a more general class of weights than the $A_p$ weights, namely the $C_p$ weights (see \eqref{Cp condition}), are necessary for \eqref{Hilbert maximal} to hold. This was generalized in higher dimensions by Sawyer in \cite{Saw3}. Sawyer in \cite{Saw3} also shows that the $C_q$ condition for $q>p$ is sufficient for \eqref{Hilbert maximal} to hold. It is still unknown if the $C_p$ condition is sufficient for \eqref{Hilbert maximal} to hold.

We say the measure $\o$ satisfies the $C_p$ condition, $1<p<\infty$ if it is absolutely continuous to the Lebesgue measure, i.e. $d\o=w(x)dx$, and there exist $C,\epsilon>0$ such that
\begin{equation}\label{Cp condition}
\frac{|E|_w}{\int_{\R^n}|M\mathbf{1}_I(x)|^pw(x)dx}\leq C\left(\frac{|E|}{|I|}\right)^\epsilon, \text{for } E \text{ compact subset of  }I \text{ cube}
\end{equation}
with  $\int_{\R^n} \left(M\mathbf{1}_I\left(x\right)\right)^pw(x)dx<\infty$, where $|E|_w=\int_Ew(x)dx$. Here $M$ denotes the classical Hardy-Littlewood maximal operator. We will call  $w(x)$ a $C_p$ weight.

We prove that $C_p$ weights is a strictly larger class than the $A_\infty$ weights. We actually show that there exist even doubling weights (see
\eqref{doubling}) that are also $C_p$ weights that are not in $A_\infty$. Check the diagram at the end of the introduction.

\begin{thm}\label{Cp theorem}
($C_p\cap\mathcal{D}\nRightarrow A_\infty$) There exist a weight $w$ that is doubling and satisfies the $C_p$ condition but $w$ is not an $A_\infty$ weight.
\end{thm}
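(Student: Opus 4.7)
The plan is an iterative Cantor-type construction of $w$ on a reference cube $Q_0\subset\R^n$, producing $w$ as the limit of a sequence $w_k$ built by successive modifications. Fix a small scale ratio $\eta\in(0,1)$, a nested family $Q_0\supset Q_1\supset Q_2\supset\cdots$ with $|Q_{k+1}|/|Q_k|=\eta$, and at the $k$-th stage redistribute the $w_{k-1}$-mass on $Q_k$ by amplifying it by $\lambda_k>1$ on a thin subset $E_k\subset Q_k$ and compensating elsewhere inside $Q_k\setminus Q_{k+1}$, leaving $w_{k-1}$ untouched outside $Q_k$. The sequence $\{\lambda_k\}$ is kept bounded so that the doubling constant remains uniform, while $E_k$ is chosen so that $|E_k|/|Q_k|\to 0$ as $k\to\infty$; the ``inspired by \cite{GaKS}'' in the abstract suggests the overall shape of the construction is already standard for producing doubling non-$A_\infty$ weights, and what is new here is the additional $C_p$ verification.

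Next I would verify the three required properties in turn. Doubling $w(2I)\le Dw(I)$ is a routine case analysis on the position of an arbitrary cube $I$ relative to the nested family $\{Q_k\}$ — whether $I$ lies inside some annular region $Q_k\setminus Q_{k+1}$, straddles a boundary $\partial Q_k$, or engulfs some $Q_k$ — with the uniform bound $\sup_k \lambda_k<\infty$ together with the separation $\eta$ producing a doubling constant $D$ independent of the stage. Failure of $A_\infty$ then follows from the standard $(\varepsilon,\delta)$ characterization applied to the pairs $(E_k,Q_k)$: $|E_k|/|Q_k|\to 0$ while $|E_k|_w/|Q_k|_w$ remains bounded below by a fixed $c>0$ by construction.

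The main obstacle is verifying the $C_p$ condition uniformly over \emph{all} pairs $E\subset I$. The crucial gain over the $A_\infty$ denominator $|I|_w$ is that the $C_p$ denominator picks up mass at all larger scales,
\begin{equation*}
\int_{\R^n}(M\mathbf{1}_I)^p w \; \gtrsim \; \sum_{j\ge 0}2^{-jnp}\, w(2^jI),
\end{equation*}
so that the far-scale terms supply an amplification $\int(M\mathbf{1}_I)^p w\gg |I|_w$ whenever mass is hierarchically spread across the scales of the Cantor family. The technical delicacy is to make the disparity between consecutive masses $w(Q_k)/w(Q_{k+1})$ large enough for this amplification to beat the concentration ratio $(|E|/|I|)^\varepsilon$ — note that any overly naive choice in which $w(Q_k)\asymp w(Q_{k+1})$ for all $k$ would force $\int(M\mathbf{1}_{Q_k})^p w\asymp |Q_k|_w$ and hence $C_p\Rightarrow A_\infty$, defeating the purpose — but gentle enough to preserve doubling. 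Moreover the estimate must be carried out uniformly not just for the testing cubes $Q_k$ but for every cube $I$ in $\R^n$, in particular for those straddling some $\partial Q_k$, where the bookkeeping is heaviest. It is precisely this joint tuning of $\eta$, $\{\lambda_k\}$ and the geometry of $\{E_k\}$ that earns the construction the adjective ``quite technical''.
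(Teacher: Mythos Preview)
Your high-level mechanism is correct: the $C_p$ denominator $\int (M\mathbf 1_I)^p w$ collects mass from scales far above $I$, and if that mass grows fast enough the resulting amplification can absorb the $A_\infty$ failure. You also correctly isolate the central tension (doubling caps the growth rate from above, while the $C_p$ gain needs it large). Where your outline departs from the paper is in the \emph{geometry} of the construction, and that difference matters.

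The paper does \emph{not} use a single nested family $Q_0\supset Q_1\supset\cdots$ with bad sets $E_k$ sitting inside. Instead it builds three concentric mechanisms with three different parameters. At the outermost scales the mass grows like $\delta_1^{-m}$ with $\delta_1>3^{-p}$, chosen precisely so that the tail $\sum_m(3^{-p}/\delta_1)^m$ \emph{converges} and hence $\int(M\mathbf 1_I)^p w<\infty$. Inside certain far-out left thirds $I^l_{n_k}$ there is a ``buffer'' of $n_k$ triadic generations where the mass grows like $\delta_2^{-m}$ with $\delta_2<3^{-p}$, so that $\sum_m(3^{-p}/\delta_2)^m$ \emph{diverges}; this is what produces the $C_p$ gain, and its size is governed by the free parameter $n_k$. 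Only at the innermost scale, inside $3I^{l,0}_{n_k}$, does the GaKS-type self-similar redistribution run (to a finite depth $i_k$) to produce the pair $(E_{n_k},J_{n_k})$ with $w(E_{n_k})/w(J_{n_k})\approx 1/2$ but $|E_{n_k}|/|J_{n_k}|\approx 2^{-k}$. The crucial point is that $n_k$ is chosen \emph{after} $k$ and $i_k$, large enough that the buffer gain exceeds $2^k$; this decouples the $C_p$ gain from both the doubling constant (fixed by $\delta_1,\delta_2$) and the $A_\infty$ failure factor.

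Your single nested family has no analogue of this free buffer length: the number of outer scales available to a cube at depth $k$ is exactly $k$, so the $C_p$ gain and the $A_\infty$ defect are rigidly linked, and you have not shown the balance actually closes. There is also a gap in your $A_\infty$ failure step: a \emph{single} amplification by a bounded factor $\lambda_k$ on a thin $E_k$ gives $w(E_k)/w(Q_k)\lesssim\lambda_k|E_k|/|Q_k|\to 0$, the opposite of what you want. In GaKS (and in the paper) the failure comes from \emph{iterating} a fixed bounded redistribution many times, not from one step; your description conflates the iterated process with a single stage. Finally, you do not say how $w$ lives on all of $\R^n$ so that the $C_p$ integral is finite for every cube --- the paper handles this with the $\delta_1>3^{-p}$ outer regime, which has no counterpart in your sketch.
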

The weight $w$ used in theorem \ref{Cp theorem}. has a doubling constant $C_w\gtrsim 3^{np}$. We show that this is sharp, i.e. if the doubling constant $C_w$ of the weight $w$ does not satisfy $C_w \geq 3^{np}$ then the $C_p$ condition is equivalent to $A_\infty$.

\begin{thm}\label{Cp theorem small doubling}
($C_p$+small doubling $\Rightarrow \A_\infty$) Let w be a doubling $C_p$ weight with doubling constant $C_w< 3^{np}$ in $\R^n$. Then $w \in A_\infty$.
\end{thm}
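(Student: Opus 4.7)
The plan is to show that under the small-doubling hypothesis $C_w<3^{np}$ the denominator of the $C_p$ condition, $\int_{\R^n}(M\mathbf{1}_I)^p w\,dx$, is comparable to $|I|_w$; once that is established, the defining inequality \eqref{Cp condition} reduces to the classical quantitative $A_\infty$ characterization
\begin{equation*}
\frac{|E|_w}{|I|_w}\lesssim\left(\frac{|E|}{|I|}\right)^\epsilon,\quad E\subset I,
\end{equation*}
and the theorem follows at once.

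The lower bound $|I|_w\leq\int(M\mathbf{1}_I)^pw\,dx$ is trivial from $M\mathbf{1}_I\geq\mathbf{1}_I$. For the matching upper bound, decompose $\R^n$ into the triadic annular shells $A_0=I$ and $A_k=3^kI\setminus 3^{k-1}I$ for $k\geq 1$. A standard elementary estimate yields $M\mathbf{1}_I(x)\lesssim 3^{-nk}$ for $x\in A_k$, since any cube containing $x$ and meeting $I$ must have sidelength $\gtrsim 3^k\ell(I)$. Therefore
\begin{equation*}
\int_{\R^n}(M\mathbf{1}_I)^pw\,dx\lesssim\sum_{k\geq 0}3^{-nkp}|3^kI|_w.
\end{equation*}

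Iterating the doubling hypothesis produces $|3^kI|_w\leq C_w^k|I|_w$, and the right-hand side collapses to a geometric series
\begin{equation*}
\int_{\R^n}(M\mathbf{1}_I)^pw\,dx\lesssim |I|_w\sum_{k\geq 0}\left(\frac{C_w}{3^{np}}\right)^k,
\end{equation*}
which converges precisely when $C_w<3^{np}$. This is exactly where the quantitative threshold in the statement enters. Plugging the bound $\int(M\mathbf{1}_I)^pw\,dx\lesssim_{C_w,n,p}|I|_w$ into \eqref{Cp condition} delivers the $A_\infty$ estimate for compact $E\subset I$, and a routine inner-regularity argument extends it to arbitrary measurable $E\subset I$.

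The only point that requires care is the compatibility of conventions: the factor-$3$ doubling constant is the one that pairs naturally with the triadic shell decomposition of $M\mathbf{1}_I$, which is why the sharp threshold is $3^{np}$ rather than, say, $2^{np}$; under a different normalization one would simply reprove the same geometric decay with the corresponding exponent. Beyond this bookkeeping no real obstacle is expected, the argument being a direct unpacking of the $C_p$ condition against the natural annular estimate for the maximal function of an indicator. Sharpness of the threshold $3^{np}$ is already exhibited by the construction of Theorem \ref{Cp theorem}, so the hypothesis cannot be relaxed within this framework.
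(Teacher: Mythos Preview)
Your proof is correct and follows essentially the same route as the paper: both arguments reduce the claim to showing $\int_{\R^n}(M\mathbf{1}_I)^p\,w\,dx\approx |I|_w$ via a triadic annular decomposition, the pointwise bound $M\mathbf{1}_I\lesssim 3^{-nk}$ on the $k$-th shell, and iteration of the doubling hypothesis to sum the resulting geometric series with ratio $C_w/3^{np}$. Your remarks on the doubling-constant normalization and the inner-regularity extension to general measurable $E$ are sensible housekeeping that the paper leaves implicit.
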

\subsection{Two weight theory.}

The generalization of the one weight $A_p$ condition was naturally modified to the two weight problem by:
\begin{equation}
\label{classical Ap}
\A_p(\o,\s)=\underset{I}{\sup}\left(\frac{\o(I)}{|I|}\right)^\frac{1}{p}\left(\frac{\s(I)}{|I|}\right)^\frac{1}{p\prime}<\infty
\end{equation}
where the supremum is taken over all cubes in $\R^n$ and the weight $w$ gives its place to two positive locally finite Borel measures. Notice that by setting $d\o=w(x)^{\frac{1}{1-p}}dx$, $d\s=w(x)dx$
we retrieve the one weight $A_p$ condition \eqref{one weight Ap}.

The two weight problem could have applications in a number of problems connected to higher dimensional analogs of the Hilbert transform. For example,  questions regarding subspaces of the Hardy space invariant under the inverse shift operator (see \cite{Vol}, \cite{NaVo}),questions concerning orthogonal polynomials (see \cite{VoYu}, \cite{PeVoYu}, \cite{PeVoYu1}) and some questions in quasiconformal theory for example the conjecture of Iwaniec and Martin (see \cite{IwMa}) or higher dimensional analogues of the Astala conjecture (see \cite{LSU1}).  

The classical $\A_p$ condition \eqref{classical Ap} is necessary for \eqref{main} to hold but is no longer sufficient, which is an indication that makes two weight theory much more complicated. F. Nazarov in \cite{Naz} has shown that even the strengthened $\A_p(\o,\s)$ conditions with one or two tails of Nazarov, Treil and Volberg
\begin{equation}
\label{one tailed}
\A_p^{t_1}(\o,\s)=\underset{I}{\sup} \left(\frac{\o(I)}{|I|}\right)^\frac{1}{p}\left(P(I,\s)\right)^\frac{1}{p\prime}<\infty
\end{equation}
\begin{equation}
\label{two tailed}
\A_p^{t_2}(\o,\s)=\underset{I}{\sup} \left(P(I,\o)\right)^\frac{1}{p}\left(P(I,\s)\right)^\frac{1}{p\prime}<\infty
\end{equation}
where 
\begin{equation}\label{poisson integral}
P(I,\o)\equiv \int_\R\left(\frac{|I|^\frac{1}{n}}{(|I|^\frac{1}{n}+\dist(x,I))^{2}}\right)^n\o(dx)
\end{equation}
along with their duals $\A^{t_1,*}_p(\o,\s),\A^{t_2,*}_p(\o,\s)$, where the roles of $\s$ and $\o$ are interchanged, are no longer sufficient for \eqref{main} to hold. 

When the operator $T$ in \eqref{main} is a fractional operator such as the Cauchy transform or the fractional Riesz transforms then the fractional analogs of \eqref{classical Ap}, \eqref{one tailed}, \eqref{two tailed} are used
\begin{equation}
\label{fractional Ap}
\A^\alpha_p(\o,\s)=\underset{I}{\sup}\left(\frac{\o(I)}{|I|^{1-\frac{\alpha}{n}}}\right)^\frac{1}{p}\left(\frac{\s(I)}{|I|^{1-\frac{\alpha}{n}}}\right)^\frac{1}{p\prime}<\infty
\end{equation}
\begin{equation}
\label{fractional one tailed}
\A_p^{t_1,\alpha}(\o,\s)=\underset{I}{\sup} 
\left(\frac{\o(I)}{|I|^{1-\frac{\alpha}{n}}}\right)^\frac{1}{p}\left(\P^\alpha(I,\s)\right)^\frac{1}{p\prime}<\infty
\end{equation}
\begin{equation}
\label{fractional two tailed}
\A_p^{t_2,\alpha}(\o,\s)=\underset{I}{\sup} \left(\P^\alpha(I,\o)\right)^\frac{1}{p}\left(\P^\alpha(I,\s)\right)^\frac{1}{p\prime}<\infty
\end{equation}
where $\P^\alpha$ is the \textit{reproducing} Poisson integral and is given by 

$$\P^\alpha(I,\o)\equiv \int_{\R^n}\left(\frac{|I|^\frac{1}{n}}{(|I|^\frac{1}{n}+\dist(x,I))^{2}}\right)^{n-\alpha}\o(dx)$$
The \textit{standard} Poisson integral, is given by
$$P^\alpha(I,\o)\equiv \int_{\R^n}\frac{|I|^\frac{1}{n}}{(|I|^\frac{1}{n}+\dist(x,I))^{n+1-\alpha}}\o(dx)$$
and is used for the definition of the fractional ``buffer" conditions. The two Poisson integrals agree for $n=1$, $\alpha=0$.
We refer the reader to \cite{SSU4} for more details. All the results that we are proving here for the $\A_p$ conditions hold for their fractional analogs without any modification in the proofs.

We show that the classical $\A_p$ condition is weaker than the tailed conditions, but the two tailed $\A_p$ condition holding is equivalent to both one tailed $\A_p$ conditions holding.

\begin{thm}\label{non doubling Ap examples} We have the following implications:
\begin{enumerate}
    \item ($\A_p \nRightarrow \A_p^{t_1}\cap \A_p^{t_1,*}$) The two weight classical $\A_p$ condition does not imply the one tailed $\A_p$ conditions.
    
    \item ($\A_p^{t_1}\nRightarrow \A_p^{t_2}$) The one tailed $\A_p^{t_1}$ condition does not imply the two tailed $\A_p^{t_2}$ condition.
    
    \item ($\A_p^{t_1}\cap \A_p^{t_1,*} \Leftrightarrow \A_p^{t_2}$) The two tailed $\A_p^{t_2}$ condition holding is equivalent to both one tailed $\A_p^{t_1},\A_p^{t_1,*}$ conditions holding. 
\end{enumerate}
\end{thm}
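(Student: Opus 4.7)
The $(\Leftarrow)$ direction of (3) is immediate from $P(I,\mu)\gtrsim \mu(I)/|I|$ applied once to $\omega$ and once to $\sigma$. The remaining content consists of the two counterexample statements and the implication $(\Rightarrow)$ in (3).

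For (1), I construct an explicit pair in $\R$: take $d\omega=\mathbf{1}_{[0,1]}(x)\,dx$ and $d\sigma=\sum_{k\ge 1}c_k\,\mathbf{1}_{[2^k,2^k+1]}(x)\,dx$ with $c_k=2^{kp'}$. The classical $\A_p$ condition is finite because the only intervals $I$ giving a nontrivial ratio are those containing $[0,1]$ and some $[2^K,2^K+1]$; on these $|I|\gtrsim 2^K$ absorbs $\sigma(I)\approx c_K$, and direct calculation gives $(\omega(I)/|I|)(\sigma(I)/|I|)^{p-1}\lesssim 1$. However, at a single bump $I=[2^K,2^K+1]$ one has
\[
(\sigma(I)/|I|)^{1/p'}\,P(I,\omega)^{1/p} \;\asymp\; c_K^{1/p'}\cdot 4^{-K/p} \;\asymp\; 2^{K(p-2)/p},
\]
which diverges for $p>2$; and at $I=[0,1]$,
\[
(\omega(I)/|I|)^{1/p}\,P(I,\sigma)^{1/p'} \;\asymp\; \Bigl(\sum_k c_k/4^k\Bigr)^{1/p'} = \Bigl(\sum_k 2^{k(p'-2)}\Bigr)^{1/p'}=\infty
\]
for $p\le 2$. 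Thus at least one of $\A_p^{t_1}$, $\A_p^{t_1,*}$ is infinite on this pair, proving (1).

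For (2), by (3) it suffices to produce $(\omega,\sigma)$ with $\A_p^{t_1}<\infty$ and $\A_p^{t_1,*}=\infty$. For $p>2$ the pair from (1) already does this: the computation above shows $\A_p^{t_1,*}=\infty$, and checking all intervals (bump, spanning, sub-bump, sub-$[0,1]$) verifies $\A_p^{t_1}<\infty$. For $p\le 2$, interchanging $\omega\leftrightarrow\sigma$ in the construction with $c_K=4^K$ gives the required pair: the dual computation yields $\A_p^{t_1}<\infty$, while $P([0,1],\omega)\asymp\sum_k c_k/4^k=\sum_k 1=\infty$ forces $\A_p^{t_1,*}=\infty$.

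For the substantive direction of (3), I use the standard dyadic expansion
\[
P(I,\omega)\asymp\sum_{k\ge 0}2^{-nk}\,a_k,\qquad P(I,\sigma)\asymp\sum_{j\ge 0}2^{-nj}\,b_j,
\]
with $a_k=\omega(2^kI)/|2^kI|$, $b_j=\sigma(2^jI)/|2^jI|$. Combining $\A_p^{t_1}$ applied at $2^{\max(k,j)}I$ with the monotonicity of $\sigma(2^\cdot I)$ yields $a_kb_j^{p-1}\le C_1^p\cdot 2^{n|k-j|(p-1)}$, while the analogous use of $\A_p^{t_1,*}$ at the same cube yields $a_kb_j^{p-1}\le C_2^p\cdot 2^{n|k-j|}$. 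The main obstacle is that the double sum $\sum_{k,j\ge 0}2^{-nk-nj(p-1)}\,a_kb_j^{p-1}$ is not obviously convergent under only the uniform bound $a_kb_j^{p-1}\le C^p 2^{n|k-j|\min(p-1,1)}$; one must instead choose the sharper of the two one-tailed estimates according to whether $p\ge 2$ (use $\A_p^{t_1,*}$) or $p\le 2$ (use $\A_p^{t_1}$), and organize the summation by $\max(k,j)$ so that the Poisson decay $2^{-nk-nj(p-1)}$ exactly compensates the growth factor $2^{n|k-j|\min(p-1,1)}$. This gives $P(I,\omega)^{1/p}P(I,\sigma)^{1/p'}\lesssim C_1+C_2$, establishing $\A_p^{t_2}<\infty$.
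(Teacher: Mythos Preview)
Your construction for (1) is correct and somewhat cleaner than the paper's: a single pair witnesses the failure for every $p$, one tail blowing up for $p\le 2$ and the other for $p>2$, whereas the paper builds a more elaborate bilateral example in which both one-tailed conditions fail simultaneously.

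Part (3), however, has a genuine gap. The pointwise estimates $a_kb_j^{p-1}\le C^p\,2^{n|k-j|\min(1,p-1)}$ are valid, but they are too weak to close the double sum. Already for $p=2$ your scheme gives
\[
\sum_{k,j\ge 0}2^{-n(k+j)}a_kb_j \;\le\; C^2\sum_{k,j\ge 0}2^{-2n\min(k,j)},
\]
and the right side diverges: for each fixed value of $\min(k,j)=m$ there are infinitely many pairs contributing $2^{-2nm}$. ``Organizing by $\max(k,j)$'' does not help either, since for each fixed $M=\max(k,j)$ the inner sum is $\approx 1$ independently of $M$, and the outer sum over $M$ diverges. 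What actually works for $p=2$ is to apply the one-tailed condition at the \emph{smaller} scale and keep the full Poisson tail: from $\A_2^{t_1,*}$ at $2^jI$ one has $b_j\sum_{k\ge j}2^{-n(k-j)}a_k\le C_2^2$, hence $\sum_{k\ge j}2^{-n(k+j)}a_kb_j\le C_2^2\,2^{-2nj}$, which does sum in $j$. For $p\ne 2$ there is a second problem: $\sum_{k,j}2^{-nk-nj(p-1)}a_kb_j^{p-1}$ is not comparable to $P(I,\omega)P(I,\sigma)^{p-1}$ in the direction you need. The paper sidesteps all of this with a single-cube argument: pick ``halfway'' indices $k_1\le k_2$ for the $\sigma$- and $\omega$-Poisson expansions, set $J\approx 3^{k_1}I$, and verify directly that $(\sigma(J)/|J|)\,P(J,\omega)\gtrsim P(I,\sigma)\,P(I,\omega)$; then a single application of $\A_p^{t_1,*}$ at $J$ finishes.

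Your (2) also fails for $p<2$. With $\omega=\sum_k 4^k\mathbf 1_{[2^k,2^k+1]}$ and $\sigma=\mathbf 1_{[0,1]}$, testing $\A_p^{t_1}$ at $I=[2^K,2^K+1]$ gives $(\omega(I)/|I|)^{1/p}P(I,\sigma)^{1/p'}\asymp 4^{K/p}\cdot 4^{-K/p'}=4^{K(2-p)/p}\to\infty$. In fact no choice of $c_K$ rescues this shape of example when $p<2$: the constraint $c_K^{1/p}4^{-K/p'}\lesssim 1$ forces $c_K\lesssim 4^{K(p-1)}$, whence $\sum_K c_K/4^K\lesssim\sum_K 4^{-K(2-p)}<\infty$ and $\A_p^{t_1,*}$ cannot fail. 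The paper's example for (2) (with bumps of height $2^m$ on $[2^m,2^{m+1}]$) is built so that $P(I,\omega)P(I,\sigma)$ diverges at $I=[0,1]$ while $(\omega(I)/|I|)P(I,\sigma)$ stays bounded for all $I$, verified case-by-case.
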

The measures that we use for the proof of theorem \ref{non doubling Ap examples}. are non doubling and we show that this is the only case. All doubling measures are reverse doubling (see lemma \ref{reverse doubling implies doubling}.). So the previous sentence is justified by the following theorem:
\begin{thm}\label{Ap doubling equivalence theorem}($\o,\s\in \mathcal{D}$ , $\A_p \Rightarrow \A_p^{t_1}\Rightarrow \A_p^{t_2}$)
If $\o,\s$ are reverse doubling measures, then the classical two weight classical $\A_p$ implies the  tailed $\A_p$ conditions. 
\end{thm}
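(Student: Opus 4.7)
I would split the chain into two pieces. For the first, $\mathcal{A}_p \Rightarrow \mathcal{A}_p^{t_1}$, the main tool is the annular decomposition of the Poisson integral combined with applying $\mathcal{A}_p$ at enlarged scales. For the second link $\mathcal{A}_p^{t_1} \Rightarrow \mathcal{A}_p^{t_2}$, I would reduce back to the first: since $\sigma(I)/|I| \leq P(I,\sigma)$ trivially, one has $\mathcal{A}_p \leq \mathcal{A}_p^{t_1}$; applying the first piece to the dual pair $(\sigma,\omega)$ with exponent $p'$ --- using the symmetry $\mathcal{A}_p(\omega,\sigma)=\mathcal{A}_{p'}(\sigma,\omega)$ and the reverse doubling of $\sigma$ --- produces $\mathcal{A}_p^{t_1,*}$, and part (3) of Theorem~\ref{non doubling Ap examples} then gives $\mathcal{A}_p^{t_2}$.

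For the first piece, set $I_k = 3^k I$. A standard pointwise estimate on the integrand of $P(I,\sigma)$ in each shell $I_k \setminus I_{k-1}$ gives
\[
P(I,\sigma) \lesssim \sum_{k\geq 0} 3^{-nk}\,\frac{\sigma(I_k)}{|I_k|}.
\]
The classical $\mathcal{A}_p$ condition at the enlarged scale $I_k$ yields $\sigma(I_k)/|I_k| \leq \mathcal{A}_p^{p/(p-1)}\bigl(\omega(I_k)/|I_k|\bigr)^{-1/(p-1)}$, while iterated reverse doubling of $\omega$ gives $\omega(I_k) \geq a^k \omega(I)$ for some $a>1$, so that
\[
\frac{\omega(I_k)}{|I_k|} \geq \left(\frac{a}{3^n}\right)^k\,\frac{\omega(I)}{|I|}.
\]
Equivalently, reverse doubling supplies a H\"older-type gain $\omega(I)/\omega(I_k) \lesssim 3^{-nk\alpha}$ for some $\alpha>0$.

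Plugging these bounds into the sum and applying Jensen's inequality on $\bigl(\sum 3^{-nk}\sigma_k\bigr)^{p-1}$ when $p \geq 2$, or the subadditivity $\bigl(\sum x_k\bigr)^{p-1}\leq \sum x_k^{p-1}$ when $1<p<2$, reduces $\omega(I)/|I|\cdot P(I,\sigma)^{p-1}$ to a geometric series in $k$ with ratio controlled by $a$, $3^n$ and $p$. The outcome is the bound $\mathcal{A}_p^{t_1}\lesssim\mathcal{A}_p$ with constants depending only on the reverse doubling parameters of $\omega$.

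\textbf{Main obstacle.} The delicate point is the convergence of the geometric series in the range $1<p<2$: the subadditivity step introduces a growth factor $3^{nk(2-p)}$ that the Poisson decay $3^{-nk}$ cannot cancel on its own, so one must invoke the quantitative form of reverse doubling at a large enough iteration scale (equivalently, a H\"older exponent $\alpha > 2-p$) to absorb this factor. For $p\geq 2$ no such strengthening is needed, since the resulting geometric ratio is $1/a<1$ for any reverse doubling gain. The rest of the argument is routine bookkeeping on the $\mathcal{A}_p$ constants and the Poisson estimates.
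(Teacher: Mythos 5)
Your mechanism is essentially the paper's: decompose the Poisson tail into $3^{k}$-shells, apply the classical $\A_p$ condition on the dilated cubes $3^{k}I$, and use iterated reverse doubling of $\o$ to generate a convergent geometric factor. The only structural difference is that you go through the one-tailed condition and then dualize (using the symmetry $\A_p(\o,\s)=\A_{p'}(\s,\o)$ and part (3) of Theorem \ref{non doubling Ap examples}), whereas the paper estimates the two-tailed constant directly. At $p=2$ your argument closes and coincides in substance with the paper's proof, and your treatment of the single implication $\A_p\Rightarrow\A_p^{t_1}$ for $p\geq 2$ is correct.

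The genuine gap is exactly at the point you flag, and the remedy you propose is not available. Reverse doubling supplies a \emph{fixed} exponent: $\o(3^kI)\geq(1+\delta_\o)^k\o(I)$, i.e. $\o(I)/\o(3^kI)\lesssim 3^{-nk\alpha}$ with $\alpha=\log(1+\delta_\o)/(n\log 3)$ determined by $\o$ and possibly arbitrarily small; iterating reverse doubling "at a larger scale" only reproduces the same exponent, so you cannot force $\alpha>2-p$. Worse, the estimate you need is actually false in the range $1<p<2$ with the paper's $p$-independent Poisson kernel: on $\R$ take $d\o=|x|^{-1+\epsilon}dx$ and $d\s=|x|^{(1-\epsilon)p'/p}dx$ with $0<\epsilon<2-p$. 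Both weights are doubling (indeed $A_\infty$), and $\A_p(\o,\s)<\infty$ because $\frac{-1+\epsilon}{p}+\frac{1}{p'}\cdot\frac{(1-\epsilon)p'}{p}=0$, yet $(1-\epsilon)p'/p>1$ makes $P(I,\s)=+\infty$ for every interval, so $\A_p^{t_1}(\o,\s)=\infty$. Hence no bookkeeping absorbs the growth factor $3^{nk(2-p)}$; and since your route to $\A_p^{t_2}$ applies the same lemma to the dual pair with exponent $p'$, the defect also contaminates $p>2$, so your full chain closes only at $p=2$ --- which is precisely the case the paper's written proof treats (its remark extending to general $p$ by subadditivity quietly runs into the same obstruction you identified).
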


\subsection{The testing conditions.} Since the two weight $\A_p$ conditions are not sufficient for \eqref{main} to hold, some other necessary conditions are required, namely the \textbf{1}-testing conditions
\begin{eqnarray}
\label{test}
||T(\textbf{1}_Id\s)||_{L^p(\o)} &\leq& \mathfrak{T}^p|I|_\s \\
||T^*(\textbf{1}_Id\o)||_{L^p(\s)} &\leq& (\mathfrak{T}^*)^p|I|_\o\nonumber
\end{eqnarray}
where $I$ runs over all cubes and $\mathfrak{T}, \mathfrak{T}^*$ are the best constants so that  \eqref{test} holds.

These conditions alone are  trivially not  sufficient for \eqref{main} to hold since as pointed out in \cite{NiTr} for example, the second Riesz transform $R_2$ of any measure supported on the real line is the zero element in $L^p(\o)$ for any measure $\o$ carried by the upper half plane. On the other hand, such a pair of measures need not satisfy the Muckenhoupt conditions, which are necessary for \eqref{main} to hold.

The famous Nazarov-Treil-Volberg conjecture (NTV conjecture), states that $\A_p(\o,\s)$ and testing conditions are necessary and sufficient for \eqref{main} to hold.

\subsection{The ``buffer" Pivotal and Energy conditions.} Nazarov, Treil and Volberg in a series of very clever papers assumed the pivotal condition, for $p=2$, and proved \eqref{main} (see \cite{NTV1},\cite{NTV2},\cite{Vol}).

The Pivotal condition $\V$ is given by
\begin{equation}
\label{pivotal}
\V(\o,\s)^p=\underset{I_0=\cup I_r}{\sup}\frac{1}{\s(I_0)}\displaystyle\sum_{r \geq 1}\o(I_r)P(I_r,1_{I_0}\s)^p<\infty
\end{equation}
where the supremum is taken over all possible decompositions of $I_0$ in disjoint cubes $\{I_r\}_{r \in \N}$ and all cubes $I_0$ such that $\s(I_0) \neq 0$, and its dual $\V^*$ where $\s$ and $\o$ are interchanged.

Lacey, Sawyer and Uriarte-Tuero in \cite{LSU} proved, again for $p=2$, that \eqref{main} for the Hilbert transform implies  the weaker Energy condition $\E$ 
\begin{equation}
\label{energy}
\E(\o,\s)^p=\underset{I_0=\cup I_r}{\sup}\frac{1}{\s(I_0)}\displaystyle\sum_{r \geq 1}\o(I_r)E(I_r,\o)^2P(I_r,1_{I_0}\s)^p<\infty
\end{equation}
where the supremum is taken over all possible decompositions of $I_0$ in disjoint cubes $\{I_r\}_{r \in \N}$ and all cubes $I_0$ such that $\s(I_0) \neq 0$, where
\begin{equation}\label{energy gain}
E(I,\o)^2 \equiv \frac{1}{2}\mathbb{E}_I^{\o (dx)}\mathbb{E}_I^{\o (dx')}\frac{(x-x')^2}{|I|^2}
\end{equation}
and its dual $\E^*$ where $\s$ and $\o$ are interchanged.

In the same paper, Lacey, Sawyer and Uriarte-Tuero proved that a hybrid of the Pivotal and Energy conditions was sufficient but not necessary in the two weight inequality for the Hilbert transform.

Both the energy and the pivotal conditions, sometimes referred to as ``buffer conditions", are used to approximate certain forms that appear in the proofs of almost all two weight inequalities. The NTV conjecture states that we can prove $\eqref{main}$ without assuming them.

It is true though that if both $\o,\s$ are individually $\A_\infty$ weights, the classical $\A_p(\o,\s)$ condition implies the Pivotal condition providing a short and elegant proof of the NTV-conjecture for $\A_\infty$ weights assuming existing $T1$ theory. Earlier, Sawyer in \cite{Saw2} gave a proof using different methods for the case of smooth kernels. 
\begin{thm}\label{T1 theorem} ($T1$ theorem for $\A_\infty$ weights)
Assume $\o,\s$ are in $A_\infty$, $T$ is an $\alpha$-fractional singular integral  and we have the $T1$ testing and the fractional $\A_2^\alpha(\o,\s)$ conditions to hold, along with their duals. Then, $T$ is bounded on $L^2(\R^n)$.
\end{thm}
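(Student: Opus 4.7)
The plan is to reduce the statement to the version of the Nazarov--Treil--Volberg T1 theorem whose hypotheses are Muckenhoupt plus testing plus the Pivotal buffer conditions (as developed for fractional singular integrals in \cite{SSU4}). Thus, once we know that $\V^\alpha(\o,\s)$ and the dual $\V^{\alpha,*}(\s,\o)$ are finite, the $L^2$ bound follows immediately from existing machinery. All the work is therefore in deriving the pivotal conditions from the hypotheses $\o,\s\in A_\infty$ together with $\A_2^\alpha(\o,\s)<\infty$.

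The first reduction is cheap: $A_\infty$ implies doubling, so both $\o$ and $\s$ are doubling (in particular reverse doubling by Lemma \ref{reverse doubling implies doubling}). Theorem \ref{Ap doubling equivalence theorem} then upgrades $\A_2^\alpha(\o,\s)$ automatically to the one- and two-tailed conditions $\A_2^{t_1,\alpha}$, $\A_2^{t_1,*,\alpha}$ and $\A_2^{t_2,\alpha}$, so we may treat all Muckenhoupt tail quantities as finite.

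The main step is showing $\V^\alpha(\o,\s)<\infty$. Fix a cube $I_0$ and a disjoint decomposition $I_0=\bigsqcup_r I_r$. I would bound $\sum_r \o(I_r)\P^\alpha(I_r,\mathbf{1}_{I_0}\s)^2$ as follows. First, decompose the reproducing Poisson integral by dyadic annuli around $I_r$, obtaining a bound of the form
\[
\P^\alpha(I_r,\mathbf{1}_{I_0}\s)\;\lesssim\;\sum_{k\ge 0} 2^{-k(n-\alpha)}\,\frac{\s(2^k I_r\cap I_0)}{|2^k I_r|^{1-\alpha/n}}.
\]
A weighted Cauchy--Schwarz in $k$ converts the square into a sum of squared averages with the same geometric weight. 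Next, apply the $\A_2^\alpha$ condition on the cube $2^k I_r$ to replace one factor of the squared average by $\A_2^\alpha\cdot |2^k I_r|^{1-\alpha/n}/\o(2^k I_r)$, producing
\[
\o(I_r)\P^\alpha(I_r,\mathbf{1}_{I_0}\s)^2\;\lesssim\;\A_2^\alpha\sum_{k\ge 0}2^{-k(n-\alpha)}\,\frac{\o(I_r)}{\o(2^k I_r)}\,\s(2^k I_r\cap I_0).
\]
Now the $A_\infty$ hypothesis on $\o$ yields $\o(I_r)/\o(2^k I_r)\lesssim 2^{-kn\delta_\o}$ for the reverse Hölder exponent $\delta_\o>0$. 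Summing first in $r$ and using the bounded overlap $\sum_r \mathbf{1}_{2^k I_r\cap I_0}\lesssim 2^{kn}$ gives
\[
\sum_r \o(I_r)\P^\alpha(I_r,\mathbf{1}_{I_0}\s)^2\;\lesssim\;\A_2^\alpha\,\s(I_0)\sum_{k\ge 0}2^{-k(n\delta_\o-\alpha)},
\]
which is a convergent geometric series once the exponent is positive. The dual pivotal $\V^{\alpha,*}(\s,\o)$ is obtained by the symmetric argument using the reverse Hölder exponent $\delta_\s$ furnished by $\s\in A_\infty$.

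The anticipated obstacle is the exponent balancing in the last display: for weights with a poor $A_\infty$ constant (hence small $\delta_\o$), or for fractional orders $\alpha$ close to $n$, the naive geometric series may fail to converge. I expect to handle this by a finer annular decomposition that uses \emph{both} $A_\infty$ properties simultaneously: employing reverse Hölder of $\s$ to also decay $\s(2^k I_r\cap I_0)/\s(I_0)$ at each shell before summing in $r$, thereby trading part of the overlap loss $2^{kn}$ for additional decay. Once this bookkeeping is carried out in a way that is uniform in the decomposition $\{I_r\}$, the pivotal conditions are established and the T1 theorem concludes the proof.
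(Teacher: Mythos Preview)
Your overall plan---deduce the pivotal conditions from $A_\infty$ plus $\A_2^\alpha$, then invoke \cite{SSU4}---is exactly the paper's strategy. The divergence is in how you establish the pivotal condition, and your route has a genuine gap.

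The bounded overlap claim $\sum_r \mathbf{1}_{2^k I_r\cap I_0}\lesssim 2^{kn}$ is false when the decomposition cubes $I_r$ have disparate sizes: take $I_0=[0,1]$ and $I_r=[2^{-r},2^{-r+1}]$; for $k\ge 2$ every dilate $2^k I_r$ contains the origin, so the pointwise overlap there is infinite. If instead you repair this step by replacing overlap with doubling of $\s$ (writing $\s(2^kI_r)\le K_\s^k\s(I_r)$ and then using $\sum_r\s(I_r)=\s(I_0)$), you land precisely on the computation of Theorem~\ref{Ap and small doubling corollary}, which only closes under the smallness restriction $K_\s<2^p(1+\delta_\o)$---a condition general $A_\infty$ pairs need not satisfy. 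Your proposed fix via ``reverse H\"older of $\s$'' does not rescue the exponent: the $A_\infty$ estimate controls $\s(E)/\s(I)$ when $E$ is \emph{small} in $I$, whereas here you would need decay of $\s(2^kI_r\cap I_0)/\s(I_0)$ with $2^kI_r$ potentially as large as $I_0$. This obstruction is exactly why the paper leaves the general doubling case open (Question~\ref{doubling measures question}).

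The paper's mechanism is different in kind, not degree. It dominates the Poisson average pointwise by the dyadic maximal function, $P(I_r,\mathbf{1}_{I_0}\s)\lesssim\inf_{x\in I_r}M_d(\mathbf{1}_{I_0}\s)(x)$, so that the entire pivotal sum is controlled by the single integral $\int_{I_0}(M_d\mathbf{1}_{I_0}\s)^p\,d\o$, i.e.\ by the dyadic Sawyer testing constant $S_d^{p,p}(\o,\s)$. That constant is then bounded by $\A_p(\o,\s)\,\s(I_0)$ via a Calder\'on--Zygmund stopping-time argument (Theorem~\ref{Sawyer testing theorem}) in which $\s\in A_\infty$ enters as a Carleson packing condition: the stopping cubes lose a fixed fraction of their $\s$-mass at each generation. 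This route bypasses both the overlap issue and the exponent balancing altogether; no smallness of doubling or $A_\infty$ constants is required.
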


\subsection{The relationship between the two weight $\A_p$ and ``buffer" conditions. }It is shown in \cite{LSU} that we can have a pair of measures satisfying the tailed $\A_2$ conditions \eqref{one tailed}, \eqref{two tailed} but failing to satisfy the Pivotal condition \eqref{pivotal}, hence proving the implication $\A_2^{t_2}\nRightarrow \V^2$.

We show here that the Pivotal condition \eqref{pivotal} does not imply the tailed $\A_2$ conditions \eqref{one tailed}, \eqref{two tailed}. 
\begin{thm}\label{non doubling pivotal example}($\V^p \nRightarrow \A_p^{t_1}$) Let $1<p\leq 2$.
The Pivotal condition $\V^p$ does not imply the one tailed $\A_p$ condition $\A_p^{t_1}$. 
\end{thm}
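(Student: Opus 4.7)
The plan is to construct, for each $p \in (1, 2]$, a pair $(\o, \s)$ of locally finite positive Borel measures on $\R$ (extendable to $\R^n$ by supporting them on a line) with $\V^p(\o, \s) < \infty$ but $\A_p^{t_1}(\o, \s) = \infty$. The guiding observation is that $\A_p^{t_1}$ sees the Poisson integral against the \emph{full} $\s$ while $\V^p$ only sees $\mathbf{1}_{I_0}\s$, with the extra damping by $\s(I_0)$ in the denominator; so we seek $\s$-mass concentrated in a way that boosts $P(I, \s)$ at small scales but whose restriction to $I_0$ gets absorbed by $\s(I_0)$. Taking $\o = \delta_0$ is convenient, because the Pivotal sum then collapses to a single term, namely the one for the partition cube $I^*$ containing $0$. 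I treat $1 < p < 2$ and $p = 2$ separately, since the failure of $\A_p^{t_1}$ for $p < 2$ comes from pure scaling while $p = 2$ requires a logarithmic correction.

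For $1 < p < 2$ I take $\o = \delta_0$ and $\s = \delta_1$. For $I = [-r, r]$ with small $r$, $\o(I)/|I| = 1/(2r)$ and $P(I, \delta_1) = 2r/(1+r)^2$, so
\[\A_p^{t_1}(I) = \frac{(2r)^{1/p'-1/p}}{(1+r)^{2/p'}} \sim r^{(p-2)/p} \to \infty\]
as $r \to 0$, the exponent being negative precisely because $p < 2$. For $\V^p$, a non-zero contribution requires $I_0 \supset \{0, 1\}$ (otherwise either $\s(I_0) = 0$ or $\o(I_r) = 0$ for every $r$), and then the Pivotal sum reduces to $P(I^*, \delta_1)^p$ for the unique $I^* \subset I_0$ containing $0$; a one-variable calculus argument shows $P(I^*, \delta_1) \le 1$ for every admissible $I^*$, whence $\V^p \le 1$.

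For $p = 2$ I take $\o = \delta_0$ and the accumulating atomic measure
\[\s = \sum_{k \ge 1}\frac{1}{k\, 2^{2k}}\, \delta_{2^{-k}}.\]
For $I = [-r, r]$ with $r = 2^{-N}$, splitting the $\s$-atoms according to whether they lie inside $I$ (indices $k \ge N$) or outside ($k < N$) and estimating the Poisson kernel in each regime gives $P(I, \s) \sim 2r \sum_{k=1}^{N-1} 1/k \sim 2r \log N$, so $\A_2^{t_1}(I)^2 = (1/(2r))\cdot P(I, \s) \sim \log N = \log\log_2(1/r) \to \infty$. For $\V^2$, fix $I_0 = [-R, R]$ with $R = 2^{-K}$, so that $\s(I_0) \sim R^2/K$; parametrising a sub-cube $I^* = [-r, r] \subset I_0$ by $r = R\, 2^{-s}$, the same splitting yields $P(I^*, \mathbf{1}_{I_0}\s) \sim 2r \log(1 + s/K)$ and therefore
\[\frac{P(I^*, \mathbf{1}_{I_0}\s)^2}{\s(I_0)} \sim K\, 2^{-2s} \log^2(1 + s/K),\]
an expression uniformly bounded over $K \ge 1, s \ge 0$ (the maximum in $s$ is attained at $s$ of order $1$, with value of order $1/K$). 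Non-centred choices of $I_0$ and $I^*$ are handled identically, because the Poisson kernel is controlled by $\min(1/|I^*|, |I^*|/\dist^2)$, so enlarging the left endpoint of $I^*$ only decreases $P(I^*, \mathbf{1}_{I_0}\s)$.

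The main obstacle is the $p = 2$ case, where both $\A_2^{t_1}(I) \to \infty$ and $\V^2 < \infty$ depend on a delicate logarithmic balance: the factor $1/k$ in $a_k = (k\, 2^{2k})^{-1}$ is exactly what makes $\s(I_0) \sim R^2/\log(1/R)$ carry an extra $\log$ factor in the denominator, which absorbs the $\log^2$ appearing in $P(I^*, \mathbf{1}_{I_0}\s)^2$. A purely power-law choice $a_k = 2^{-qk}$ cannot simultaneously yield $\V^p < \infty$ and $\A_p^{t_1} = \infty$, since these conditions force the incompatible scaling constraints $q \ge p' = p/(p-1)$ and $q < p'$ on $\s$; this obstruction is what forces the $\log$ correction at $p = 2$ and why, for $1 < p < 2$, the cleaner disjoint-support example $\s = \delta_1$ is used instead.
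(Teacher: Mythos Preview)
Your proof is correct and rests on the same core mechanism as the paper's: take $\o=\delta_0$ so that the Pivotal sum collapses to the single term $P(I^*,\mathbf 1_{I_0}\s)^p/\s(I_0)$ for the partition cube $I^*\ni 0$, and design $\s$ so that the full Poisson tail diverges while the localised, $\s(I_0)$-normalised version stays bounded.

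The routes diverge in the choice of $\s$. The paper uses a single measure $\s=\sum_{n\ge 2}n\,\delta_n$ (atoms marching to infinity) that works uniformly for $1<p\le 2$; the failure of $\A_p^{t_1}$ comes from $P([0,1],\s)=\sum n^{-1}=\infty$, and the Pivotal bound is obtained by a three-case split on $|I_1|$ versus $1$ and $n$. You instead split into two regimes: for $p<2$ the single atom $\s=\delta_1$ suffices because the scaling exponent $(p-2)/p$ is already negative, making the argument essentially one line; for $p=2$ you build $\s$ with atoms accumulating at the origin carrying the logarithmic weight $(k\,4^k)^{-1}$, so that $\s(I_0)$ picks up an extra $\log$ factor that absorbs the $\log^2$ in $P(I^*,\mathbf 1_{I_0}\s)^2$. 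Your $p<2$ case is cleaner than the paper's, at the cost of needing a second, more delicate construction at $p=2$; the paper's unified example trades that simplicity for a single argument covering the whole range.

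One small inaccuracy: your claim that ``enlarging the left endpoint of $I^*$ only decreases $P(I^*,\mathbf 1_{I_0}\s)$'' is not literally true, since for an atom at $2^{-k}>d$ the kernel $L/(L+2^{-k}-d)^2$ is maximised at $L=2^{-k}-d$, not monotone in $L$. What does hold is that the kernel is bounded by $\min(1/L,\,L\cdot 4^k)$ (using $L+2^{-k}-d=2^{-k}-c\ge 2^{-k}$), so the same two-regime splitting---now according to whether $2^{-k}\lessgtr L$ rather than $2^{-k}\lessgtr r$---gives $P(I^*,\mathbf 1_{I_0}\s)\lesssim L\bigl(1/K_L+\log(K_L/K)\bigr)$ with $K_L=\log_2(1/L)$, and the bound $K\,2^{-2s}\log^2(1+s/K)$ follows with $s=K_L-K$. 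So the non-centred case goes through, just via the kernel bound you cite rather than the monotonicity you claim.
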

\begin{rem}\label{Pivotal implies Energy}
It is immediate from \eqref{energy gain} that the  Energy condition \eqref{energy} is dominated by the Pivotal condition \eqref{pivotal} hence we immediately get the following important corollary.
\end{rem}

\begin{cor}\label{energy corollary}($\E\nRightarrow \A_2^{t_1}$)
Let $1<p\leq 2$.
The Energy condition $\E$ does not imply the one tailed $\A_p$ condition $\A_p^{t_1}$. \qed
\end{cor}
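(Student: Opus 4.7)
The plan is very short because this corollary is essentially a bookkeeping consequence of Remark \ref{Pivotal implies Energy} combined with Theorem \ref{non doubling pivotal example}. First I would make the remark explicit. Comparing the definitions \eqref{pivotal} and \eqref{energy}, the energy functional differs from the pivotal functional only by the extra factor $E(I_r,\o)^2$ inside the sum. Since $|x-x'|$ is controlled by the diameter of $I$ for $x,x'\in I$, the integral in \eqref{energy gain} yields a pointwise bound $E(I,\o)^2 \le C_n$ depending only on the dimension and independent of the measure $\o$. Therefore the energy sum is dominated term-by-term by a constant multiple of the pivotal sum, giving $\E(\o,\s) \lesssim \V(\o,\s)$.

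Next I would invoke Theorem \ref{non doubling pivotal example}, which for each $1 < p \le 2$ produces a pair of measures $(\o,\s)$ with $\V^p(\o,\s) < \infty$ while $\A_p^{t_1}(\o,\s) = \infty$. By the comparison from the preceding paragraph, this same pair satisfies $\E(\o,\s) < \infty$, and hence witnesses $\E \nRightarrow \A_p^{t_1}$ for the given range of $p$.

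There is no substantive obstacle here: all the difficulty is absorbed into the construction behind Theorem \ref{non doubling pivotal example}, and the pointwise estimate on $E(I,\o)^2$ holds for arbitrary locally finite positive Borel measures $\o$, requiring neither doubling nor any other regularity hypothesis. The only thing one has to be careful about is to check that the $p$-dependence of $\V^p$ versus $\E^p$ matches (the $E(I,\o)^2$ factor carries no power of $p$), but this amounts to noting that a uniform bound on $E(I,\o)^2$ passes through $p$-th roots harmlessly.
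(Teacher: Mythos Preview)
Your proposal is correct and follows exactly the paper's approach: the corollary is deduced immediately from Remark~\ref{Pivotal implies Energy} (the bound $E(I,\o)^2\le C_n$ giving $\E\lesssim\V$) together with Theorem~\ref{non doubling pivotal example}, and the paper simply marks it with a \qed. Your write-up just makes explicit the pointwise bound on $E(I,\o)^2$ that the paper leaves as ``immediate.''
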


\subsection{Organization of the paper}

In section 4 we prove theorems \ref{Cp theorem} and \ref{Cp theorem small doubling}. In section 5.1 we prove theorem \ref{non doubling Ap examples}. and in section 5.2 we prove theorem \ref{Ap doubling equivalence theorem}. We prove the $T1$ theorem for $\A_\infty$ weights, theorem \ref{T1 theorem}, in section 5.3, using the Sawyer testing condition (see \eqref{Sawyer testing} and theorem \ref{Sawyer testing theorem}). In section 5.4 we prove theorem \ref{non doubling pivotal example} and give a partial answer to question \ref{doubling measures question} in theorem \ref{Ap and small doubling corollary}. Check the graph and the lattices in sections 2 and 3 for a summary in the $T1$ theory and the theorems presented in this paper.

\subsection{Known cases of the NTV conjecture.}While the general case of the NTV 
conjecture in $\R^n$ is still not completely understood, several important special 
cases have been completely solved. 

First, in the two part paper by Lacey, Sawyer, 
Shen and Uriarte-Tuero \cite{LSSU} and Lacey \cite{Lac} proved the NTV conjecture, 
namely that $\A_p(\o,\s)$ and testing conditions are necessary and sufficient for 
\eqref{main} to hold, assuming also that the measures $\s$ and $\o$ had no common 
point masses, for the Hilbert Transform. Hyt{\"o}nen \cite{Hyt} with his new offset 
version of $\A_2$
\begin{equation}
\label{offset A2}
\A_2^{\text{offset}}(\o,\s)=\underset{I}{\sup}\frac{\o(I)}{|I|}\int_{\R^n\backslash I}\left(\frac{|I|^\frac{1}{n}}{(|I|^\frac{1}{n}+\dist(x,I))^2}\right)^n\s(dx) <\infty
\end{equation}
removed the restriction of common point masses on $\s, \o$. An alternate approach using ``punctured" versions of $\A_2$ appears in \cite{SSU5}.

Other important cases include first Sawyer, Shen, Uriarte-Tuero \cite{SSU4} for $\alpha$-fractional singular integrals, Lacey-Wick \cite{LW} for the Riesz transforms, Lacey, Sawyer, Shen, Uriarte-Tuero and Wick \cite{LSSUW} for the Cauchy transform and Sawyer, Shen, Uriarte-Tuero \cite{SSU2} for the Riesz tranform when a measure is supported on a curve in $\R^n$ and recently \cite{Saw2} for general Calderon-Zygmund operators and doubling measures that also satisfy the fractional $\A_\infty^\alpha$ condition, check \eqref{A alpha infinity}. The NTV conjecture is yet to be proven for a general operator $T$.

\textbf{Acknowledgements:} I would like to thank my advisors Eric Sawyer and Ignacio Uriarte-Tuero for introducing me the area, presenting the problem to me and providing suggestions for its progress.

\section{Lattices}
\begin{center} 
\textbf{One weight conditions}
\end{center}
Combining \eqref{lattice Ap}, theorem \ref{Cp theorem}, \textit{remark \ref{remark1}}, \textit{remark \ref{remark2}}, \textit{remark \ref{remark fractional A infinity}} and theorem \ref{fractional A infinity and doubling}  we get, for $p<q$, the following lattice of inclusions for the conditions used in one weighted theory

\[ 
	\xy
	(-21,7)*+{A_1(\o)\subsetneq A_p(\o) \subsetneq A_q(\o) \subsetneq A_\infty(\o)\ \ \ \ \ };
	(3.2,10.5)*+{\rotatebox{45}{$\,\, \subsetneq\,\,$}};
	(26.5,13)*+{\A_\infty^\alpha(\o)\cap \mathcal{D}(\o)\subsetneq\left\{
	\begin{array}{l}
	\!\!\!\mathcal{D}(\o)
		  \\
	\!\!\!\A_\infty^\alpha(\o)	  
	\end{array}
\right.};
	(3.2,3)*+{\rotatebox{-45}{$\,\, \subsetneq\,\,$}};
	(23,1)*+{\ \ \ \ \ C_p(\o)\cap \mathcal{D}(\o)\subsetneq\left\{
	\begin{array}{l}
	\!\!\!\mathcal{D}(\o)
		  \\
	\!\!\!C_p(\o)	  
	\end{array}
\right.};
	\endxy    
	\]
\begin{center} 
\textbf{Two weight conditions}
\end{center}
Combining \textit{remark \ref{remark Ap}}, theorem \ref{non doubling Ap examples}, theorem \ref{Ap doubling equivalence theorem}, \textit{remark \ref{pivotal implies classical Ap}}, theorem \ref{non doubling pivotal example}, \textit{remark \ref{Pivotal implies Energy}}, corollary \ref{energy corollary}, theorem \ref{fractional A infinity and doubling}, theorem \ref{Sawyer testing theorem}, corollary \ref{A infinity and pivotal corollary} and the example in \cite{LSU} we get the following lattice of inclusions for the conditions used in two weighted theory.\\
\\
\text{For general Radon measures:}
\begin{eqnarray*}
\text{Theorem \ref{non doubling Ap examples},\,\,  }&&\\
\text{\textit{remark \ref{remark Ap}}, \textit{remark \ref{pivotal implies classical Ap}}, \cite{LSU}: }&&\V(\o,\s)^p\subsetneq \A_p(\o,\s)\subsetneq \A^{t_1}_p(\o,\s)\cup \A^{t_1}_p(\s,\o)\\
&& A_p^{t_1}(\o,\s)\cap A_p^{t_1}(\s,\o)=A_p^{t_2}(\o,\s)\\
&& A_p^{t_1}(\o,\s)\subsetneq A_p^{t_2}(\o,\s)\\
\text{\textit{Remark \ref{Pivotal implies Energy}},\,\, }\\
\text{theorem \ref{non doubling pivotal example}, corollary \ref{energy corollary}: }&&\E(\o,\s)^p\subsetneq\V(\o,\s)^p \centernot\implies \A_p^{t_1}(\o,\s)\subsetneq \A_p^{t_2}(\o,\s)
\end{eqnarray*}
\text{For doubling measures:}
\begin{eqnarray*}
\text{Theorem \ref{Ap doubling equivalence theorem}: }&&\A_p(\o,\s)=  A_p^{t_1}(\o,\s)= \A_p^{t_2}(\o,\s)\\
\text{Theorem \ref{Sawyer testing theorem}, corollary \ref{A infinity and pivotal corollary}: }&& \A_p(\o,\s)\cap \A_\infty(\o) \subsetneq  S_d(\o,\s)\subseteq\V(\o,\s)^p\\
\text{Theorem \ref{Ap and small doubling corollary}: }&& \A_p(\o,\s)\cap \mathcal{D}(\s)\cap\mathcal{D}(\o)\subsetneq \V(\o,\s)^p\\
&&\text{(small doubling constant)}
\end{eqnarray*}

\section{What we know so far}
The following diagram shows the relationships between the different conditions that have appeared in the study of two weighted inequalities for the $\textbf{1}$-testing case over the years.

\hspace{-1.3 cm}\includegraphics[height=7 cm,width=15 cm]{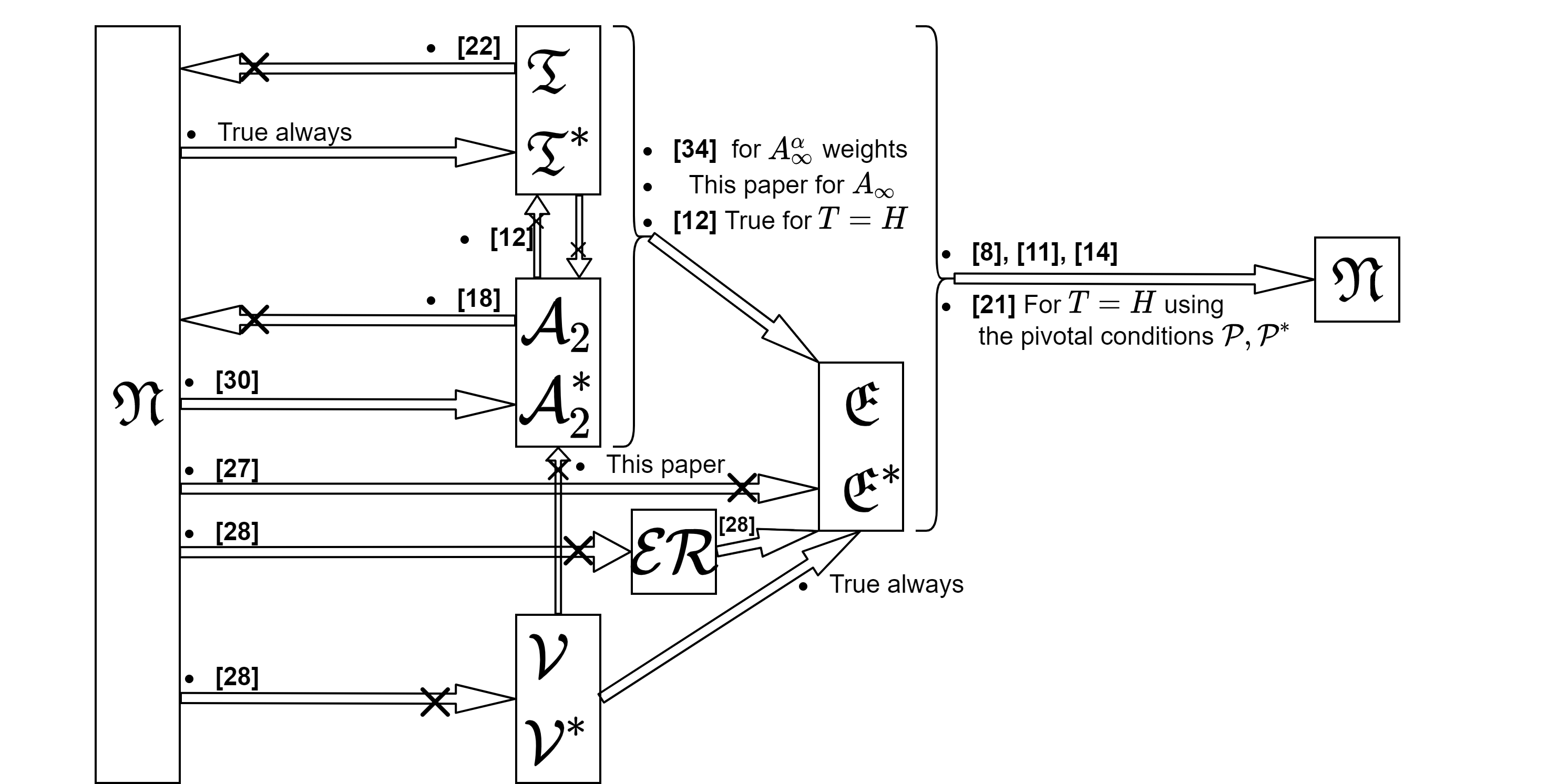}

\section{One weight conditions}
\subsection{The $A_1$ and $A_\infty$ conditions.}

We say the weight $w(x)$ is an $A_1$ weight if and only if 
\begin{equation}\label{one weight A1 condition}
Mw(x)\leq [w]_{A_1}w(x)
\end{equation}
and we call $[w]_{A_1}$ the $A_1$ constant of $w$. $A_1$ is a stronger condition than the $A_p$ condition for $p>1$.

If we take the union of all the $A_p$ weights for the different $p$ we get the larger class of $A_\infty$ weights, i.e. $A_\infty=\displaystyle \bigcup_{p>1}A_p$ (check \cite{DJ}, chapter 7). Another equivalent and commonly used characterization for $A_\infty$ weights is the following:
We say $w \in A_\infty$, if for all $I \subset \R^n$ and $E\subset I$, there exist uniform constants $C,\varepsilon>0$ such that
\begin{equation}
\label{A infinity}
\frac{w(E)}{w(I)}\leq C\left(\frac{|E|}{|I|}\right)^\varepsilon.
\end{equation}
\begin{rem}
We have the following linear lattice for $1<p<q<\infty$:
\begin{equation}\label{lattice Ap}
A_1\subsetneq A_p \subsetneq A_q \subsetneq A_\infty
\end{equation}
The power weights $w(x)=|x|^\alpha$ show that all the inclusions are proper.
\end{rem}
In particular we have the following known lemma.
\begin{lem}
Let $w(x)=|x|^\alpha$, $x \in \R^n$. Then\\
\begin{center}
$\displaystyle
[w]_{\A_p}\approx \begin{cases} (\alpha+n)^{-1}(-\alpha \frac{p\prime}{p}+n)^{-\frac{p}{p\prime}}, \quad -n<\alpha<n(p-1)
\\
\infty ,\quad \text{otherwise}
\end{cases}
$\end{center}\qed
\end{lem}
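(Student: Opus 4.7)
The plan is to first pin down the range of $\alpha$ from local integrability considerations, and then to carry out an explicit computation on balls/cubes centered at the origin, showing that this generic case controls all others.

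First I would observe that finiteness of $[w]_{A_p}$ forces both $w(x)=|x|^\alpha$ and its dual weight $w(x)^{1-p'}=|x|^{-\alpha/(p-1)}$ to be locally integrable, in particular in a neighborhood of the origin. Switching to polar coordinates, $\int_{B(0,1)}|x|^\beta\,dx$ is finite iff $\beta+n>0$, so we need $\alpha>-n$ and $-\alpha/(p-1)>-n$, i.e.\ $\alpha<n(p-1)$. Outside this range some cube centered at the origin already makes the $A_p$ averages infinite, forcing $[w]_{A_p}=\infty$, which establishes the second branch of the formula.

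For the main range $-n<\alpha<n(p-1)$, I would first compute the $A_p$ quantity on a ball $B(0,r)$ centered at the origin. Using polar coordinates,
\begin{equation*}
\frac{1}{|B(0,r)|}\int_{B(0,r)}|x|^\alpha\,dx \;=\; \frac{n}{\alpha+n}\,r^\alpha,
\qquad
\frac{1}{|B(0,r)|}\int_{B(0,r)}|x|^{-\alpha/(p-1)}\,dx \;=\; \frac{n}{n-\alpha/(p-1)}\,r^{-\alpha/(p-1)}.
\end{equation*}
Raising the second factor to the power $p-1=p/p'$ and multiplying, the $r$-dependence cancels (this is the scale-invariance of $A_p$ for a homogeneous weight) and one is left with a factor of $(\alpha+n)^{-1}(n-\alpha p'/p)^{-p/p'}$ up to a dimensional constant $n^p$, matching the claimed formula. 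This immediately gives the lower bound on $[w]_{A_p}$ by taking the sup over such balls, and the same over cubes centered at the origin after a harmless comparison between cubes and inscribed/circumscribed balls.

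The matching upper bound is where the only real work lies: I need to show that no off-center cube beats the centered case by more than a constant. I would split cubes $I$ of sidelength $\ell$ according to $d:=\dist(0,I)$. If $d\lesssim\ell$, then $I\subset B(0,C_n\ell)$, and both $\int_I|x|^\alpha\,dx$ and $\int_I|x|^{-\alpha/(p-1)}\,dx$ are dominated by the corresponding integrals over $B(0,C_n\ell)$, so the averaged product is at most a constant multiple of the centered-ball value just computed (this is where the factors $(\alpha+n)^{-1}$ and $(n-\alpha p'/p)^{-(p-1)}$ reappear, from the radial integrals $\int_0^{C_n\ell} s^{\beta+n-1}\,ds$). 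If $d\gg\ell$, then $|x|\approx d$ uniformly on $I$, so both averages are $\approx d^\alpha$ and $\approx d^{-\alpha/(p-1)}$, and their weighted product is $\approx 1$, hence bounded by the centered contribution (which is $\gtrsim 1$ for $\alpha$ bounded away from the endpoints, and trivially dominated when $\alpha$ is near an endpoint).

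The main obstacle is this uniform comparison with the centered case when $\alpha$ is close to $-n$ or close to $n(p-1)$: one must verify that the ``near'' regime actually captures the blow-up in $(\alpha+n)^{-1}$ or $(n-\alpha p'/p)^{-p/p'}$, rather than some subtler configuration of off-center cubes doing so. This is handled by the radial integration bookkeeping above, where the singular factors arise transparently from $\int_0^{C\ell} s^{\beta+n-1}\,ds=(C\ell)^{\beta+n}/(\beta+n)$, and are stable under enlarging the cube to a comparable centered ball.
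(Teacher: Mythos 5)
Your computation is correct, and in fact the paper offers no argument at all for this lemma: it is stated as a known fact with an immediate \qed, so there is nothing to compare against beyond the standard calculation you reproduce. Your sketch has the right structure: local integrability of $|x|^\alpha$ and of the dual weight $|x|^{-\alpha/(p-1)}$ pins down the range $-n<\alpha<n(p-1)$ and gives the infinite branch; the centered ball (or cube) computation produces exactly $n^p(\alpha+n)^{-1}\bigl(n-\alpha\tfrac{p'}{p}\bigr)^{-p/p'}$, giving the lower bound; and the split of arbitrary cubes into $\dist(0,I)\lesssim\ell(I)$ versus $\dist(0,I)\gg\ell(I)$ gives the matching upper bound. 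Two small points are worth making explicit if you write this out in full. First, in the near regime the comparison constants such as $C_n^{\alpha+n}$ and $C_n^{\,n(p-1)-\alpha}$, and in the far regime the constant in $|x|^\alpha\approx d^\alpha$, depend on $\alpha$, but they are uniformly bounded because $\alpha$ ranges over the bounded interval $(-n,n(p-1))$, so the implied constants depend only on $n$ and $p$ as required. Second, your endpoint worry in the far regime dissolves: since $\alpha+n\leq np$ and $n-\alpha p'/p\leq np'$ on the whole admissible range, the centered value is bounded below by a constant depending only on $n,p$, so the $\approx 1$ contribution of far-away cubes is always dominated, not just near the endpoints or away from them.
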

\subsection{Doubling and reverse doubling measures.} The $A_p$ weights for $1\leq p \leq \infty$ are all absolutely continuous to the Lebesgue measure and \textit{doubling}. 

We say a measure $\o$ on $\R^n$ is \textit{doubling}, and write $\o \in \mathcal{D}$, if it's not the zero measure and there 
is a constant $K>0$ such that for all cubes $I\subset \R^n$ we have 
\begin{equation}\label{doubling}
\o(2I)\leq K\o(I).  
\end{equation}

Not all doubling measures are $A_\infty$ as was first shown in \cite{FeMu} using an absolutely continuous measure $w$ that is also doubling but is not in $A_\infty$. Mutually singular doubling measures also exist, which of course are not in $A_\infty$, a nice construction can be found in \cite{GaKS}.

We say a measure $\s$ is \textit{reverse doubling} if there exists 
$\varepsilon>0$ depending only on the measure $\s$ such that for all cubes $I$:
\begin{equation}\label{reverse doubling}
\s(I)\leq (1+\varepsilon)\s(I).
\end{equation}
Doubling measures satisfy the reverse doubling property as the following lemma from \cite{Ruz} proves.
\begin{lem}\label{reverse doubling implies doubling}
Let $\s$ be a doubling measure with doubling constant $K_\s$. Then there exist a constant $\delta_\s>0$  depending only on the doubling constant of $\s$ such that for all cubes $I$ we have $\s(2I)\geq (1+\delta_\s)\s(I)$.\qed
\end{lem}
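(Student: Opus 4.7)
The plan is to exhibit, inside the annulus $2I\setminus I$, a subcube $J$ whose $\s$-measure is bounded below by $K_\s^{-4}\,\s(I)$. Once this is done, the additivity of $\s$ on disjoint sets gives
$$\s(2I)\;\geq\;\s(I)+\s(J)\;\geq\;(1+K_\s^{-4})\,\s(I),$$
so $\delta_\s=K_\s^{-4}$ works. By translation and dilation invariance of the statement we may assume $I=[-1/2,1/2]^n$, so $2I=[-1,1]^n$.

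My choice for $J$ is the ``corner'' cube
$$J:=\left[\tfrac{3}{4},1\right]^n\subset 2I,$$
whose side length is $1/4$ and which is manifestly disjoint from $I$ because $3/4>1/2$ in every coordinate. The key geometric observation is that the concentric dilate $16J=2^{4}J$ has side $4$ and is centered at $(\tfrac{7}{8},\ldots,\tfrac{7}{8})$, hence $16J=[-9/8,23/8]^n\supset I$.

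Applying the doubling condition four times then yields
$$\s(I)\;\leq\;\s(16J)\;=\;\s(2^{4}J)\;\leq\;K_\s^{4}\,\s(J),$$
and rearranging gives $\s(J)\geq K_\s^{-4}\,\s(I)$. Combined with the disjointness $I\cap J=\emptyset$ inside $2I$, this completes the proof.

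I do not anticipate any real obstacle. The one small point to be careful about is choosing $J$ strictly inside $2I\setminus I$ (hence the side-$1/4$ corner cube, rather than a side-$1/2$ quadrant of $2I$ that would share the corner $(1/2,\ldots,1/2)$ with $I$). This avoids any measure-theoretic contribution along a shared boundary, so disjointness is automatic and we need not worry about whether $\s$ may place mass on that single point.
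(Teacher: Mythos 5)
Your argument is correct. Note that the paper does not prove this lemma at all: it states it with a \qed and attributes it to the reference \cite{Ruz} (Ruzhansky), so there is no in-paper proof to compare against; your write-up supplies the standard self-contained argument that such references give. The geometry checks out: $J=[3/4,1]^n$ is disjoint from $I=[-1/2,1/2]^n$, lies in $2I=[-1,1]^n$, and its concentric dilate $2^4J=[-9/8,23/8]^n$ contains $I$, so iterating the doubling inequality four times gives $\s(I)\le\s(2^4J)\le K_\s^4\,\s(J)$, whence $\s(2I)\ge\s(I)+\s(J)\ge(1+K_\s^{-4})\s(I)$ with $\delta_\s=K_\s^{-4}$ depending only on $K_\s$, exactly as required (the case $\s(I)=0$ is covered trivially, and your care in separating $J$ from the corner $(1/2,\dots,1/2)$ correctly sidesteps any possible atom on the shared boundary point). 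The only cosmetic remark is that the constant $K_\s^{-4}$ is not optimal --- one could pack several disjoint corner cubes into $2I\setminus I$ or use a smaller dilation factor in dimension-dependent fashion --- but the lemma only asks for some $\delta_\s>0$ depending on $K_\s$, so this is irrelevant.
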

For the rest of this section, we are going to say that a measure $\o$ is doubling if 
\begin{equation}\label{doubling3}
\o(3I)\leq C\o(I).
\end{equation}
This definition is equivalent to \eqref{doubling}.

\subsection{A $C_p$ and doubling weight that is not in $A_\infty$}
In this subsection we give the proof for theorem \ref{Cp theorem}. The construction is a very involved variation of the construction in \cite{GaKS}.
\\
\\\textit{Proof of theorem \ref{Cp theorem}:} Let $I_0=[-\frac{1}{2},\frac{1}{2}]$ and $I_n=3I_{n-1}=3^nI_0$, the intervals 
centered at $0$ with length $3^n$. We call $\mathcal{G}$ the triadic grid created by the intervals $I_n$. Define the measure $w$ as follows: 
$w\left(x\right)=1$, $x \in I_0$ and $w\left(I_n\right)=\frac{1}{\delta_1^n}$, $\frac{1}{3}>\delta_1>0$ to 
be determined later. Call $I^l_n, I^m_n, I^r_n$ the left, middle and right third of 
$I_n$ respectively. Let $w(x)=\frac{3^{-n+1}(1-\delta_1)}{2\delta_1^n}$, $x \in I^r_n$.

Fix $k \in \N$ and $n_k \in \N$ to be determined later.
Let $I^{l,m}_{n_k}$ to have the same center as 
$I^l_{n_k}$ and $\vert I^{l,0}_{n_k}\vert=3^{m}$, $0\leq m \leq n_k-1$. Let $w(I^{l,m}_{n_k})=\delta_2^{n_k-m}w(I^l_{n_k})$, where $\frac{1}{3}>\delta_2>0$ to be determined later. For $m \geq 2$ let 
$w\left(x\right)=\frac{3(1-\delta_2)}{2|I^{l,m}_{n_k}|}w\left(I^{l,m}_{n_k}\right),$ for all $x\in I^{l,m}_{n_k}\backslash I^{l,m-1}_{n_k}$. This defines $w$ 
completely outside $3I^{l,0}_{n_k}$. Check Figure \ref{Figure 2.1.}.

$ $

$ $
\begin{center}
\begin{tikzpicture}
\label{Figure 2.1.}

\color{blue}
\draw (-2/27,0) -- (2/27,0);
\draw (-2/27-12/27,1/4) -- (-2/27,1/4);
\draw (2/27,1/4) -- (2/27+12/27,1/4);
\draw (-2/27-12/27-12/9,1/2) -- (-2/27-12/27,1/2);
\draw (2/27+12/27,1/2) -- (2/27+12/27+12/9,1/2);
\draw (2/27+12/27+12/9,1)--(2/27+12/27+12/9+12/3,1);
\color{red}
\draw (-2/27-12/27-12/9-12/3,1)--(-2/27-12/27-12/9-12/3+4/3,1);
\draw (-2/27-12/27-12/9-4/3,1)--(-2/27-12/27-12/9,1);
\draw (-2/27-12/27-12/9-12/3+4/3,0.5)--(-2/27-12/27-12/9-12/3+4/3+4/9,0.5);
\draw (-2/27-12/27-12/9-4/3-4/9,0.5)--(-2/27-12/27-12/9-4/3,0.5);
\draw (-2/27-12/27-12/9-12/3+4/3+4/9,0.25)--(-2/27-12/27-12/9-12/3+4/3+4/9+4/27,0.25);
\draw (-2/27-12/27-12/9-4/3-4/9-4/27,0.25)--(-2/27-12/27-12/9-4/3-4/9,0.25);
\draw (-2/27-12/27-12/9-12/3+4/3+4/9+4/27,0)--(-2/27-12/27-12/9-4/3-4/9-4/27,0);
\color{black}
\draw[decorate,decoration={brace,mirror}]  (-2/27-12/27-12/9-12/3,-0.1) -- (-2/27-12/27-12/9,-0.1)
node (m) at (-2/27-12/27-12/9-12/3+2,-0.4){\footnotesize $I^l_{k_n}$};
\draw[decorate,decoration={brace,mirror}]  (0,-0.1) -- (0,-0.1)
node (m) at (0,-0.4){\footnotesize $I_{0}$};
\end{tikzpicture}
Figure 4.3
\end{center}
$ $

Now let $I\subset I^{l,0}_{n_k}$ be any triadic interval such that
$|I|\geq 3^{-i_k}$,  and $i_k \in \N$ will be determined later. Let 
$$
w\left(I\right)=\left\{
	\begin{array}{ll}
		\delta_2 w\left(\pi I\right)  & \mbox{if } \partial I \cap \partial \pi 
		I=\emptyset\\
		\frac{1-\delta_2}{2}w\left(\pi I\right)  & \mbox{if } \partial I \cap 
		\partial \pi I\neq\emptyset
	\end{array}
\right.
$$
where $\pi I$ is the triadic parent of $I$ in the grid $\mathcal{G}$. Let $w\left(x\right)$ be constant for any triadic interval $I\subset I^{l,0}_{n_k}$ with $|I|\leq 3^{-i_k}$. 

We are left with defining $w$ on $3I^{l,0}_{n_k}\backslash I^{l,0}_{n_k}$. Call 
$J^l_{n_k}$ the left third of $3I^{l,0}_{n_k}$ and $J^r_{n_k}$ its right third. 
Let $J^{l,i_k}_{n_k}$ be the right most triadic $i_k$ child of $J^l_{n_k}$ and let
$w(x)=\left(\frac{1-\delta_2}{2}\right)^{i_k}w(J^{l}_{n_k})$, $x \in 
J^{l,i_k}_{n_k}$. Now for all triadic $I$ such that $J^{l,i_k}_{n_k} \subset I 
\subset J^{l}_{n_k}$, let $I^l, I^m, I^r$ denote the left, middle and right thirds
of $I$ and define $w(x)=\frac{3(1-\delta_2)}{2|I|}w(I)$, $x \in I^l$, 
$w(x)=\frac{3\delta_2}{|I|}w(I)$, $x\in I^m$ and $w(I^r)=\frac{1-\delta_2}{2}w(I)$. 
Similarly (but on the left end) we define $w$ on $J^r_{n_k}$. This construction on $J^l_{n_k}$ and $J^r_{n_k}$ is done so that $w$ is doubling.

Indeed, to see that $w$ is doubling, let $J_1,J_2$ be two triadic intervals of the 
same length that touch. If they have the same triadic parent then $w(J_1)/w(J_2)\lesssim 
\frac{1}{\min(\delta_1,\delta_2)}$. If not, we apply the first case to their common ancestor and get again $w(J_1)/w(J_2)\lesssim 
\frac{1}{\min(\delta_1,\delta_2)}$. For an arbitrary interval $I$, let $3^m\leq |I|\leq3^{m+1}$. Then $I \subset J_1\cup J_2$ triadic intervals with $|J_1|=|J_2|=3^{m+1}$. Then $w(3I)\lesssim \frac{1}{\min(\delta_1,\delta_2)}w(I)$.

Allowing $i_k \rightarrow \infty$ makes $w$ singular to Lebesgue. Check (\cite{GaKS}, Lemma 2.2).
Choose $i_k$ so that there exists an interval $J_{n_k}$ and  $E_{n_k}\subset J_{n_k} \subset 3I^{l,0}_{n_k}$ ,such that
\begin{equation}\label{worst case}
\frac{w\left(E_{n_k}\right)}{w\left(J_{n_k}\right)}\approx\frac{1}{2}, \quad \frac{|E_{n_k}|}{|J_{n_k}|}\approx \frac{1}{2^k} \quad \text{ and }\quad \frac{w\left(E\right)}{w\left(I\right)}\lesssim 2^k \frac{|E|}{|I|}
\end{equation}
for all intervals $I\subset 3I^{l,0}_{n_k}$ and $E\subset 
I$. This can be done by following in \cite{GaKS} definition 
2.1. and lemma 2.2. Note that because we stop at height 
$i_k$, \eqref{worst case} tells us that there is a ``worst 
interval" $J_{n_k}$.

\RC{\color{red} This is true by choosing $\delta_1,\delta_2<\frac{1}{3}$. It can be seen easily for intervals of the form $I=3^mI^{l,0}_{n_k}$ that 
$$
\frac{w\left(E\right)/w\left(I\right)}{|E|/|I|}\leq\left(3\min(\delta_1,\delta_2)\right)^m\frac{w\left(E\right)/w\left(I^{l,0}_{n_k}\right)}{|E|/|I^{l,0}_{n_k}|}\lesssim 2^k.
$$
Taking different cases on $I$ gives the answer for an arbitrary interval.}

By letting $k \rightarrow \infty$ it is clear that $A_\infty$  fails to hold for $w$. So we now need to prove that the $C_p$ condition holds.

By the end of the next calculation we will determine $\delta_1,\delta_2$. We want to prove $w$ is $C_p$ and for that we need to show that \eqref{Cp condition} holds with  $\int_\R \left(M\mathbf{1}_I\left(x\right)\right)^pw\left(x\right)dx<\infty$ for any interval $I$. Let first, $I=I^{l,0}_{n_k}$.

\begin{eqnarray}\label{Cp main}
&&
\int_\R \left(M\mathbf{1}_{I^{l,0}_{n_k}}\left(x\right)\right)^pw\left(x\right)dx=\int_{I^{l,0}_{n_k}}\left(M\mathbf{1}_{I^{l,0}_{n_k}}\left(x\right)\right)^pw\left(x\right)dx+\\
&+&
\int_{I^{l}_{n_k}\backslash I^{l,0}_{n_k}}\left(M\mathbf{1}_{I^{l,0}_{n_k}}\left(x\right)\right)^pw\left(x\right)dx+\int_{\R\backslash I^{l}_{n_k}}\left(M\mathbf{1}_{I^{l,0}_{n_k}}\left(x\right)\right)^pw\left(x\right)dx\notag\\
&\equiv&
A+B+C\notag
\end{eqnarray}
We have immediately $A=w\left(I^{l,0}_{n_k}\right)$, for $B$ we get
\begin{eqnarray*}
B
&=&\int_{I^{l}_{n_k}\backslash I^{l,0}_{n_k}}\left(\frac{|I^{l,0}_{n_k}|}{2|I^{l,0}_{n_k}|+2dist\left(x,I^{l,0}_{n_k}\right)}\right)^pw\left(x\right)dx\\ 
&\approx& 
2^{-p}\left(1-\delta_2\right)\sum_{m=1}^{n_k-1}\frac{3^{-mp}}{\delta_2^m}w\left(I^{l,0}_{n_k}\right)
=2^{-p}\left(1-\delta_2\right)w\left(I^{l,0}_{n_k}\right)\sum_{m=1}^{n_k-1}\left(\frac{3^{-p}}{\delta_2}\right)^m
\end{eqnarray*}
Now choose $\delta_2=\frac{3^{-p}}{2}$ so that the series above diverges (any $\delta_2\leq 3^{-p}$ works here). We also want $n_k$ so that
\begin{equation}\label{the gain}
2^{-p}\left(1-\delta_2\right)w\left(I^{l,0}_{n_k}\right)\sum_{m=1}^{n_k-1}\left(\frac{3^{-p}}{\delta_2}\right)^m\gtrsim 2^kw\left(I^{l,0}_{n_k}\right).
\end{equation}

We are only left with calculating term $C$. We have,
\begin{eqnarray}\label{tail converging}
C
&=&
\int_{\R\backslash I^{l}_{n_k}}\left(\frac{|I^{l,0}_{n_k}|}{2|I^{l,0}_{n_k}|+2dist\left(x,I^{l,0}_{n_k}\right)}\right)^pw\left(x\right)dx\\
&\approx&
2^{-p}3^{-n_kp}\left(1-\delta_1\right)\frac{1-\delta_2}{\delta_2^{n_k-1}}w\left(I^{l,0}_{n_k}\right)\sum_{m=1}^\infty \left(\frac{3^{-p}}{\delta_1}\right)^m\notag
\end{eqnarray}
Choose $\delta_1>3^{-p}$ so that the infinite series converges. Combining the estimates for $A,B$ and $C$ we get:
\begin{equation}\label{Cp first gain}
\int_\R \left(M\mathbf{1}_{I^{l,0}_{n_k}}\left(x\right)\right)^pw\left(x\right)dx<\infty 
\end{equation}
and
\begin{equation}\label{Cp first win}
\frac{w\left(E\right)}{\int_\R \left(M\mathbf{1}_{I^{l,0}_{n_k}}(x)\right)^pw\left(x\right)dx}\leq \frac{w\left(E\right)}{2^kw(I^{l,0}_{n_k})}\lesssim  \frac{2^k}{2^k}\frac{|E|}{|I^{l,0}_{n_k}|}=\frac{|E|}{|I^{l,0}_{n_k}|}.
\end{equation}
for $E\subset I^{l,0}_{n_k}$. 
We want to extend \eqref{Cp first gain} and \eqref{Cp first win} to all triadic 
intervals. Note that \eqref{Cp first gain} holds for any   interval $I$. To see that, choose $n$ big enough so that $I\subset I_n$. Then, following the calculations for estimating $C$ in \eqref{tail converging} we get that 
\begin{equation*}
\int_{\R\backslash I_n} \left(M\mathbf{1}_{I}(x)\right)^pw\left(x\right)dx<\infty 
\end{equation*}
which of course gives us
\begin{equation}\label{maximal finiteness}
\int_{\R}\left(M\mathbf{1}_{I}(x)\right)^pw\left(x\right)dx<\infty   
\end{equation}
To get \eqref{Cp first win} for any triadic $I\subset 
3I^{l,0}_{n_k}$, note that we can follow the same 
calculations that led to \eqref{the gain} and just choose $n_k$ big enough so that we get
the gain $2^kw(I)$. This is possible since the construction is finite and it stops at some height $i_k$. For that finite number of intervals, we choose $n_k$ big enough so that all the intervals get the gain $2^k$, i.e. $\int_\R \left(M\mathbf{1}_{I}(x)\right)^pw\left(x\right)dx\geq 2^kw(I)$.   So we have for any $E\subset I$, using \eqref{worst case},
\begin{equation}\label{Cp winning}
\frac{w\left(E\right)}{\int_\R \left(M\mathbf{1}_{I}(x)\right)^pw\left(x\right)dx}\leq \frac{w(E)}{2^kw(I)}\lesssim \frac{|E|}{|I|}.
\end{equation}
 
We will use the following calculation for triadic intervals $I\subset I^l_{n_k}$. Let $I=3I^{l,0}_{n_k}$, following \eqref{Cp main} and using $\delta_2=\frac{3^{-p}}{2}$,
$$
\int_\R(M1_{\mathbf{I}})^pdw\equiv A'+B'+C'.
$$ 
and $A'+B'\approx 3^p(A+B)$ hence
$$
\int_{I^l_{n_k}}\left(M\mathbf{1}_{I}\left(x\right)\right)^pw\left(x\right)dx\approx 3^p\int_{I^l_{n_k}} \left(M\mathbf{1}_{I^{l,0}_{n_k}}\left(x\right)\right)^pw\left(x\right)dx
$$
and
\begin{equation}\label{cp winning2}
\frac{w\left(E\right)}{\int_\R \left(M\mathbf{1}_{I}(x)\right)^pw\left(x\right)dx}\lesssim \frac{w(E)}{3^p2^kw(I^{l,0}_{n_k})}\lesssim 3^{1-p}\frac{|E|}{|I|}\leq \frac{|E|}{|I|}
\end{equation}
for any $E\subset 3I^{l,0}_{n_k}$, so we don't lose any of 
the ``gain" necessary for \eqref{Cp winning} to hold. We can
repeat for all triadic intervals $I$ such that 
$I^{l,0}_{n_k}\subset I\subset I^l_{n_k}$. Note that for $I=I^l_{n_k}$ $B'=0$. To extend \eqref{cp winning2} to triadic intervals $I\supset I^l_{n_k}$ notice that 
$$
\frac{w(E)}{w(\pi(I))}\lesssim\delta_1\frac{w(E)}{w(I)}\lesssim \delta_1\frac{|E|}{|I|}\leq 3\delta_1\frac{|E|}{|\pi(I)|}\leq \frac{|E|}{|\pi(I)|}\Longrightarrow \frac{w(E)}{w(\pi(I))}\lesssim \frac{|E|}{|\pi(I)|}
$$
for any $E\subset 3I^{l,0}_{n_k}$, where we used $\delta_1<\frac{1}{3}$.

To get \eqref{Cp winning} for an arbitrary triadic interval, let $I$ be a triadic interval not contained in any $I^{l,0}_{n_k}$ and $E$ any subset of $I$. We write 
$$
E=\left(\bigcup_{I^{l,0}_{n_k}\subset I}\left(E\cap 3I^{l,0}_{n_k}\right)\right)\bigcup \left(E\big\backslash\bigcup_{I^{l,0}_{n_k}\subset I} 3I^{l,0}_{n_k}\right)=E_1\cup E_2
$$
Using \eqref{cp winning2} we see that
\begin{eqnarray}\label{end proof1}
w(E_1)=\sum_{I^{l,0}_{n_k}\subset I}w(E\cap 3I^{l,0}_{n_k})
&\lesssim&
\sum_{I^{l,0}_{n_k}\subset I}\frac{|E\cap 3I^{l,0}_{n_k}|}{|I|}\int_\R(M1_{\mathbf{I}})^pdw\\
&=&\frac{|E_1|}{|I|}\int_\R(M1_{\mathbf{I}})^pdw \notag
\end{eqnarray}
To deal with $E_2$, note that for $\displaystyle x \in I\big\backslash\bigcup_{I^{l,0}_{n_k}\subset I} 3I^{l,0}_{n_k}$, $w(x)\lesssim \frac{3(1-\delta_2)}{2|I|}w(I)$ so we get
\begin{equation}\label{end proof2}
 \frac{w(E_2)}{w(I)}\approx \frac{|E_2|}{|I|}
\end{equation}
combining \eqref{end proof1}, \eqref{end proof2} we get \eqref{Cp winning} for a 
triadic interval $I$. 

We are left with extending \eqref{Cp winning} to an arbitrary interval $I$. Let $3^m\leq |I|\leq 3^{m+1}$ and $E\subset I$. Then $I \subset J_1\cup J_2$, $J_1,J_2$ triadic intervals such that $|J_1|=|J_2|=3^{m+1}$.  Since
$$
M\mathbf{1}_{I}(x)\approx M\mathbf{1}_{J_1}(x)\approx M\mathbf{1}_{J_2}(x), \text{for all }x \in \R.
$$
we get
\begin{eqnarray*}
\frac{w\left(E\right)}{\int_\R \left(M\mathbf{1}_{I}(x)\right)^pw\left(x\right)dx}
&\approx& 
\frac{w\left(E\cap J_1\right)}{\int_\R \left(M\mathbf{1}_{J_1}(x)\right)^pw\left(x\right)dx}+
\frac{w\left(E\cap J_2\right)}{\int_\R \left(M\mathbf{1}_{J_2}(x)\right)^pw\left(x\right)dx}\\
&\lesssim&
\frac{|E\cap J_1|}{|J_1|}+\frac{|E\cap J_2|}{|J_2|}\approx \frac{|E|}{|I|}
\end{eqnarray*}
This shows that $w$ satisfies \eqref{Cp condition} and hence $w$ is a $C_p$ 
weight and the proof is complete. \qed

\subsection{Doubling $C_p$ weights are in $\A_\infty$ for small doubling constants} 

Note that the construction in the proof of theorem \ref{Cp theorem} we depended heavily on the big doubling constant of the weight $w$. Here we show that this is the only case by proving theorem \ref{Cp theorem small doubling}. 
\\
\\\textit{Proof of theorem \ref{Cp theorem small doubling}:} It will be enough to show that 
$$
\int_{\R^n}|M\mathbf{1}_I|^pw(x)dx\approx w(I)
$$
the result then follows immediately from \eqref{Cp condition}. Let $I_n=3^mI$ the 
cubes with same center as $I$ and side length $\ell(I_n)=3^m\ell(I)$. We write
\begin{eqnarray*}
\int_{\R^n}|M\mathbf{1}_I|^pwdx
\!\!\!&=&\!\!\!
\sum_{m=0}^\infty \int_{I_m\backslash I_{m-1}}\!\!\!\!\!\!\!|M\mathbf{1}_I|^pwdx\approx\sum_{m=0}^\infty \int_{I_m\backslash I_{m-1}}\frac{|I|^pw(x)dx}{(|I|^\frac{1}{n}+\dist(x,I))^{np}}\\
&\approx&\!\!\!
\sum_{m=0}^\infty |I|^p\frac{w(I_m)}{|I_m|^p}\lesssim \sum_{m=0}^\infty \frac{(C_w)^mw(I)}{(3^{np})^m}\lesssim w(I)
\end{eqnarray*}
since $C_\s<3^{np}$ by hypothesis and the series converges.\qed

\begin{rem}\label{remark1}
Not all doubling weights are $C_p$ weights. For an example just choose $\delta_1<3^{-p}$ in the construction of theorem \ref{Cp theorem}.
\end{rem}

\begin{rem}\label{remark2}
There exist non-doubling $C_p$ weights. For an example choose $\delta_{2,k}=\frac{1}{5k}$ in each $I_{n_k}$  in the construction of theorem \ref{Cp theorem}. A much simpler example is given by getting the Lebesgue measure in $\R^n$ and setting the measure of the unit ball equal to 0, i.e. define $w(E)=m(E\backslash B(0,1))$ where $B(0,1)$ is the unit ball in $\R^n$.
\end{rem}
\subsection{The $\A_\infty^\alpha$ condition.}

To complete the picture for the one weight conditions we are introducing the fractional $\A_\infty^\alpha$ condition. We are following very closely \cite{Saw2} where $\A_\infty^\alpha$ was introduced.

First we define the $\alpha-$relative capacity of a measure $\mathbf{Cap}_\alpha(E;I)$ of a compact subset $E$ of a cube $I$ by

$$
\mathbf{Cap}_\alpha(E;I)\!=\!\inf \Big\{\!\!\int h(x)dx:h \geq 0, Supp h\subset 2I \text{ and }I_\alpha h\geq (diam2I)^{\alpha -n} \text{ on }E\Big\}
$$
Check \cite{AH} for more properties on capacity.

We say that a locally finite positive Borel measure $\o$ is said to be an $\A_\infty^\alpha$ measure if
\begin{equation}\label{A alpha infinity}
\dfrac{\o(E)}{\o(2I)}\leq \eta(\mathbf{Cap}_\alpha(E,I))
\end{equation}
when $\o(2I)>0$, for all compact subsets E of a cube I, for some function $\eta : [0,1] \to  [0,1]$ with $\displaystyle\lim_{t \to 0}\eta(t)=0$. 

Note that omitting the factor 2 in $\o(2I)$ above makes the condition more restrictive in general, but remains equivalent for doubling measures. It is shown in \cite{Saw2} that $\o \in \A_\infty^\alpha$ implies the Wheeden-Muckenhoupt inequality 
\begin{equation} 
\int\left|I_\alpha f\right|^pd\o\leq \int \left|M_\alpha f\right|^pd\o
\end{equation}
for all $f$ positive Borel measures.
\begin{rem}\label{remark fractional A infinity}
$\A_\infty^\alpha$ measures are not necessarily doubling. 
Take for example the Lebesgue measure in $\R^n$ and set the 
measure of the unit ball equal to 0, i.e. define 
$\o(E)=m(E\backslash B(0,1))$ where $B(0,1)$ is the unit ball
in $\R^n$. This measure is clearly non-doubling and hence 
not in $A_\infty$ but it is an $\A_\infty^\alpha$ measure.
\end{rem}
There exist also doubling fractional $A_\infty$ measures that are not in $A_\infty$. The example we use for that is exactly the one used in \cite{GaKS} but here we have to calculate the relative capacities of the sets used.

\begin{thm}\label{fractional A infinity and doubling}
($\A^\alpha_\infty \cap \mathcal{D}\nRightarrow A_\infty $) There exist a measure $\mu$ singular to the Lebesgue measure that is doubling and satisfies the $\A^\alpha_\infty$ condition with $\eta(t)=t$ but $\mu$ is not an $A_\infty$ weight.
\end{thm}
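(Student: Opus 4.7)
Proof proposal.
The plan is to take the doubling singular measure $\mu$ produced by the construction of \cite{GaKS} verbatim; since $\mu$ is mutually singular with Lebesgue measure, it is automatically not an $A_\infty$ weight, so the only work left is to verify the fractional $\A_\infty^\alpha$ condition with $\eta(t)=t$. I would first recall the GaKS construction (iterated triadic subdivision with a fixed splitting ratio $(\lambda,1-2\lambda,\lambda)$ for small $\lambda\in(0,1/3)$) and note that the uniformity of the splitting ratios gives the doubling property, while the asymptotic concentration of mass along the branches of the scheme places the support on a set of Lebesgue measure zero.

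The goal is then to show that for every cube $I$ and every compact $E\subset I$,
\begin{equation*}
\mu(E)\ \lesssim\ \mathbf{Cap}_\alpha(E,I)\,\mu(2I).
\end{equation*}
My approach is a standard capacity--potential duality. Let $h\geq 0$ be an admissible test function for $\mathbf{Cap}_\alpha(E,I)$, so $\mathrm{supp}\,h\subset 2I$ and $I_\alpha h \geq r^{\alpha-n}$ on $E$ with $r=\mathrm{diam}(2I)$. Integrating this pointwise inequality against $d\mu|_E$ and applying Fubini,
\begin{equation*}
r^{\alpha-n}\mu(E)\ \leq\ \int_E I_\alpha h\,d\mu\ =\ \int_{2I} h(y)\,I_\alpha(\mu|_E)(y)\,dy.
\end{equation*}
Hence, once the uniform Riesz-potential estimate
\begin{equation*}
\sup_{y\in 2I} I_\alpha(\mu|_{2I})(y)\ \lesssim\ r^{\alpha-n}\mu(2I) \qquad (\star)
\end{equation*}
is established, one gets $\int h\,dx\gtrsim \mu(E)/\mu(2I)$, and taking the infimum over $h$ finishes the argument.

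The main obstacle is the proof of $(\star)$. A dyadic annular decomposition around $y\in 2I$ gives
\begin{equation*}
I_\alpha(\mu|_{2I})(y)\ \lesssim\ \sum_{k\geq 0}(2^{-k}r)^{\alpha-n}\mu\bigl(B(y,2^{-k}r)\bigr),
\end{equation*}
and invoking the reverse doubling of $\mu$ from lemma \ref{reverse doubling implies doubling} in the iterated form $\mu(B(y,2^{-k}r))\leq (1+\delta)^{-k}\mu(B(y,r))$ reduces $(\star)$ to convergence of the geometric series $\sum_k \bigl(2^{n-\alpha}/(1+\delta)\bigr)^k$, i.e. to the inequality $2^{n-\alpha}<1+\delta$. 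This is a constraint on the lower dimension of $\mu$ that is not automatic for every doubling measure; the delicate point is to tune the parameter $\lambda$ in the GaKS construction so that $\mu$ is simultaneously singular to Lebesgue measure and has reverse-doubling constant exceeding $2^{n-\alpha}$. The free range $\lambda\in(0,1/3)$ provides exactly enough room to achieve this. Absorbing constants, one obtains \eqref{A alpha infinity} with $\eta(t)=Ct$, which still satisfies $\eta(t)\to 0$ as $t\to 0^+$, completing the proof.
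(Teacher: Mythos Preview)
Your overall strategy matches the paper's: the same Fubini/duality reduction to the potential bound $(\star)$, followed by an annular decomposition. The paper works triadically and feeds in the explicit self-similar mass estimate (the triadic interval of depth $k$ inside $I$ containing $x$ has $\mu$-mass at most $\bigl(\tfrac{1-\delta}{2}\bigr)^{k}\mu(I)$), yielding the convergence condition $3^{1-\alpha}\cdot\tfrac{1-\delta}{2}<1$; taking the splitting parameter close to $1/3$ then covers every $\alpha\in(0,1)$.

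The gap in your version is the appeal to lemma~\ref{reverse doubling implies doubling}. That generic lemma only manufactures a reverse-doubling increment of order $K_\mu^{-c}$ from the doubling constant $K_\mu$, so even with $\lambda$ near $1/3$ (where $K_\mu\approx 3$) it gives $1+\delta$ of size roughly $1+1/27$, which beats $2^{1-\alpha}$ only for $\alpha$ very close to $1$. Your assertion that the free range of $\lambda$ ``provides exactly enough room'' is in fact true, but not via the generic lemma: one has to compute the reverse-doubling constant of the specific GaKS measure directly from its self-similarity, observing that the maximal child mass ratio is $\max(\lambda,1-2\lambda)$, so the reverse-\emph{tripling} constant is $1/(1-2\lambda)\to 3$ as $\lambda\to\tfrac{1}{3}^-$. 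That direct computation is precisely the step the paper carries out (phrased as the bound on $I_\alpha\mu_I$), so once you fill it in the two arguments coincide. A minor side remark: the GaKS measure has full support on $[0,1]$ since every triadic subinterval receives positive mass; singularity comes from a strong-law argument showing $\mu$ is carried by a Lebesgue-null set, not from the support itself being null.
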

\begin{proof}

Let $\mu([0,1])=1$, $0<\delta<3^{-1}$ to be determined later, and for any triadic $I \subset [0,1]$ let

$$
\mu\left(I\right)=\left\{
	\begin{array}{ll}
		\delta \mu\left(\pi I\right)  & \mbox{if } \partial I \cap \partial \pi 
		I=\emptyset\\
		\frac{1-\delta}{2}\mu\left(\pi I\right)  & \mbox{if } \partial I \cap 
		\partial \pi I\neq\emptyset
	\end{array}
\right.
$$
It was shown in \cite{GaKS} that $\mu$ is a doubling measure. It is also shown that it is singular to the Lebesgue measure hence it does not satisfy the $A_\infty$ condition. 

To show that it satisfies the $\A^\alpha_\infty$ condition, let $I\subset [0,1]$ be a triadic interval and $E\subset I$ be compact. 

We claim that $||I_\alpha\mu_I||_{L^\infty(I)}=C_{\alpha,\delta}\mu(I)|I|^{\alpha-1}$, where $\mu_I$ is the restriction of $\mu$ on the set $I$ and the constant $C_{\alpha,\delta}$ is independent of $I$. For any $x \in I$ we have
$$
I_\alpha \mu_I(x)
=\int_I |x-y|^{\alpha-1}d\mu(y)\lesssim 
\mu(I)|I|^{\alpha-1}\sum_{k=0}^\infty 3^{k(1-\alpha)}\left(\frac{1-\delta}{2}\right)^k=C_{\alpha,\delta}\mu(I)|I|^{\alpha-1}
$$
as long as $3^{1-\alpha}\frac{1-\delta}{2}<1 \Rightarrow \alpha>1-\frac{\ln(\frac{2}{1-\delta})}{\ln 3}$.

Now for any $f \geq 0$, $Suppf \subset 2I$ and $I_\alpha f\geq |2I|^{\alpha-1}$ on $E$, using Fubini's theorem we have 
\begin{eqnarray*}
\mu(E)
\!\!\!\!&=&\!\!\!\!
\int_I\mathbf{1}_Ed\mu\leq \int_I|2I|^{1-\alpha}I_\alpha f(x)\mathbf{1}_E(x)d\mu(x)=\int |2I|^{1-\alpha}I_\alpha\mu_E(x)f(x)dx\\
&\leq&\!\!\!\!
||f||_1||I_\alpha\mu_E||_\infty|2I|^{1-\alpha}
\leq ||f||_1||I_\alpha\mu_I||_\infty|2I|^{1-\alpha}\lesssim ||f||_1\mu(I)
\end{eqnarray*}
So $Cap_\alpha(E,I)\gtrsim \frac{\mu(E)}{\mu(I)}$ hence $\A^\alpha_\infty$ holds with $\eta(t)=t$ and the proof is complete.
\end{proof}

\section{Two weight conditions}
We start this section with the proofs of theorems \ref{non doubling Ap examples} and \ref{Ap doubling equivalence theorem}.

\subsection{Non doubling $\A_p$ examples}
\begin{rem}\label{remark Ap}
Note first that we have the following simple implications  $\A_p^{t_2} \Rightarrow \A_p^{t_1} \Rightarrow \A_p$. Indeed it is easy to see:
\begin{eqnarray*}
P(I,\s)&=&\int_I\displaystyle\frac{\displaystyle|I|}{\displaystyle\left(|I|+\dist(x,I)\right)^2}\s(dx)+\int_{\R/I}\frac{|I|}{\displaystyle\left(|I|+\dist(x,I)\right)^2}\s(dx)\\
&=&
\frac{\s (I)}{|I|}+\int_{\R/I}\frac{|I|}{\displaystyle\left(|I|+\dist(x,I)\right)^2}\s(dx)\geq \frac{\s(I)}{|I|}
\end{eqnarray*}
and so immediately from the definitions \eqref{classical Ap}, \eqref{one tailed} and \eqref{two tailed} we get 
$$
\A_p(\o,\s)\subseteq \A_p^{t_1}(\o,\s) \subseteq \A_p^{t_2}(\o,\s) .
$$
\end{rem}
\begin{rem}We work with $p=2$ for simplicity. The examples we use work with trivial modifications for any $p>1$. 
\end{rem}
$ $\\
\textit{Proof of theorem \ref{non doubling Ap examples}:}

\textit{(1)} We want to construct two measures $\o,\s$ such that the two weight classical $\A_2$ condition holds but both one tailed $\A_2$ conditions fail. First, we construct measures $u_k$ and $v^n_k$ that satisfy
$$
\frac{u_k(I)v^n_k(I)}{|I|^2}\leq M,
$$
$$
\frac{u_k(I)}{|I|}P(I,v^n_k)\gtrsim n
$$
where the constant $M$ does not depend on $k,n$. Then we will combine the measures $u_k$ and $v^n_k$ to create $\o,\s$ such that the two weight classical $\A_2$ condition holds and both one tailed $\A_2$ conditions fail for the pair $\o,\s$. 
Let
$$
u_k(E)=m(E \cap [k,k+1])
, \quad  
v^n_k(E)=\sum_{i=0}^n \displaystyle 2^{i}m\left(E \cap \left[k+2^i,k+2^{i+1}\right]\right)
$$
where $m$ is the classic Lebesgue measure on $\R$. Let $I=(a,b)$, 
$a < k+1$ and $k+2^{i-1} \leq b <k+2^i$, for some $i\geq 0$ (of course if the interval does not 
intersect $[k,k+1]$ then $u_k(I)=0$). Then 

\begin{equation}\label{classical holds}
\frac{u_k(I)v^n_k(I)}{|I|^2} \leq \frac{4^{i+1}-1}{(4-1)(2^{i-1}-2)^2}=M
\end{equation}
which is bounded for $i>2$ (the cases $i=0,1,2$ can be seen directly). Now let $I=[k,k+1]$. We have then:

$$\frac{u_k(I)}{|I|}P(I,v^n_k)=\int_\R \frac{v^n_k (dx)}{\displaystyle\left(1+\dist(x,[k,k+1])\right)^2}=\sum_{i=0}^n \int_{I_i} \frac{v^n_k (dx)}{\displaystyle\left(1+\dist(x,[k,k+1])\right)^2}$$
where $I_i=[k+2^i,k+2^{i+1}]$. We get:

\begin{eqnarray}\label{tail lose}\hspace{0.5 cm}\displaystyle\sum_{i=0}^k \int_{I_i} \frac{v^n_k (dx)}{\displaystyle\left(1+\dist(x,[1,2])\right)^2} &\geq& 1+\displaystyle\sum_{i=1}^n \int_{I_i} \frac{v^n_k (dx)}{\displaystyle\left(1+2^i-2)\right)^2}\\
&=&1+\displaystyle\sum_{i=1}^n\frac{v^n_k(I_i)}{\left(2^i-1\right)^2}=1+\sum_{i=1}^n\frac{2^{2i}}{\left(2^i-1\right)^2}\approx n.\notag
\end{eqnarray}
Now we define $\o,\s$ as follows:
$$
\o(E)=\sum_{k=1}^\infty u_{100^k}(E)+\sum_{k=1}^\infty v^k_{-100^k}(E)
$$
$$
\s(E)=\sum_{k=1}^\infty u_{-100^k}(E)+\sum_{k=1}^\infty v^k_{100^k}(E)
$$

It is easy to see that with $I=I_k=[100^k,100^k+1]$ both one tailed $\A_2$ conditions fail using \eqref{tail lose}.

To see that the classical $\A_2$ condition holds, let $I=(a,b)$ be any interval. It is simple to check that if $I$ is big enough such that  $|a|\approx 100^k, |b|\approx 100^n$ for $k\neq n$ then 
$$
\frac{\o(I)\s(I)}{|I|^2}\leq 1.
$$
While if $|a|\approx |b|\approx 100^k$ for some $k$ then using \eqref{classical holds} we get 
$$
\frac{\o(I)\s(I)}{|I|^2} \leq 2M
$$
hence the classical two weight $\A_2$ condition holds but both one tailed $\A_2$ conditions fail.
\\

\textit{(2)} Now we turn to proving $\A^{t_1}_2  \nRightarrow \A^{t_2}_2$. Let the new measures be:
\begin{eqnarray*}
\o(E)&=&\displaystyle\sum_{n=1}^\infty 2^nm\left(E \cap \left[2^n,2^{n+1}\right]\right)\\
\s(E)&=&m(E \cap [0,1])
\end{eqnarray*}
From the construction above we can see that with $I=[0,1]$ we get:
\begin{eqnarray*}&&P(I,\o)P(I,\s)=\int_\R \frac{\o (dx)}{\displaystyle\left(1+\dist(x,[0,1])\right)^2}\int_\R \frac{\s (dx)}{\left(1+\dist(x,[0,1])\right)^2}\\
&=&
\int_\R \frac{\o (dx)}{\left(1+\dist(x,[0,1])\right)^2}\int_0^1 \frac{dx}{\left(1+\dist(x,[0,1])\right)^2}
=
\int_\R \frac{\o (dx)}{\left(1+\dist(x,[0,1])\right)^2}
\end{eqnarray*}
Now from the definition of $\o$ the last expression is equal to:
$$\displaystyle\sum_{n=1}^\infty \int_{2^n}^{2^{n+1}}\frac{2^n}{\left(1+\dist(x,[0,1])\right)^2}dx\geq \sum_{n=1}^\infty\frac{2^{2n}}{2^{2n}}=\infty$$
To prove that $\A_2^{t_1}$ hold let $I$ be an interval such that $2^n \leq |I|<2^{n+1}$ and $2^k-1 \leq \dist(I,[0,1])<2^{k+1}-1$ with $k \geq 0$. We have two cases:

(i) $n\geq k$.
$$\displaystyle\frac{\o(I)}{|I|}P(I,\s)\leq \displaystyle\frac{\displaystyle\sum_{l=1}^{n+1}2^lm\left(I\cap \left[2^l,2^{l+1}\right]\right)}{|I|^2}\leq \displaystyle\frac{\displaystyle\sum_{l=1}^{n+1}2^{2l}}{2^{2n}}=\frac{2^{2(n+2)}-1}{(4-1)2^{2n}}<M<\infty$$
where the first inequality uses the fact that the interval cannot intersect any point in $[2^{n+2}, \infty)$ otherwise $n \geq k$ would not be satisfied.

(ii)$n<k$. If $k=0$ then $I \cap [2,\infty)=\emptyset$ and there is nothing to prove. So assume $k>0$.
$$\frac{\o(I)}{|I|}P(I,\s)\leq \displaystyle\frac{\displaystyle\sum_{l=k}^{k+1}2^lm\left(I\cap \left[2^l,2^{l+1}\right]\right)}{2^{2k}}\leq \frac{2^{2k}+2^{2(k+1)}}{2^{2k}}=5<\infty$$
where the first inequality now holds because $I$ cannot contain neither any point in $ (0,2^k)$ for otherwise $\dist(I,(0,1))<2^k-1$ nor  any point in $[2^{k+1},\infty)$ because $n<k$ would not be satisfied and the proof is complete.
\\
\textit{(3)} Last, for the equivalence of the two tailed $\A_2$ condition to both one tailed $\A_2$ conditions let $I\in \R^n$ be a cube. We have:

\begin{multicols}{2}

$$
P(I,\s)\approx \frac{\s(I)}{|I|}+\sum_{k=1}^\infty \sum_{m=1}^{3^n-1} \frac{\s(I^k_m)}{3^{kn}|I^k_m|}
$$

$$
P(I,\o)\approx \frac{\o(I)}{|I|}+\sum_{k=1}^\infty \sum_{m=1}^{3^n-1} \frac{\o(I^k_m)}{3^{kn}|I^k_m|}
$$

\columnbreak
\begin{center}
\resizebox{3 cm}{3 cm}{%
 \begin{tikzpicture}
 
\label{Figure 2.2.}
\node at (0,0) {I};
\node at (1/2,1/2) {$I^1_m$};
\node at (3/2,3/2) {$I^2_m$};
\color{blue}
\draw (-1/4,-1/4) -- (-1/4,1/4);
\draw (-1/4,1/4)-- (1/4,1/4);
\draw (1/4,1/4)-- (1/4,-1/4);
\draw (1/4,-1/4)-- (-1/4,-1/4);
\draw (-3/4,-3/4) -- (-3/4,3/4);
\draw (-3/4,3/4)-- (3/4,3/4);
\draw (3/4,3/4)-- (3/4,-3/4);
\draw (3/4,-3/4)-- (-3/4,-3/4);
\draw (-1/4,-1/4) -- (-3/4,-1/4);
\draw (-1/4,1/4)-- (-3/4,1/4);
\draw (1/4,1/4)-- (3/4,1/4);
\draw (1/4,-1/4)-- (3/4,-1/4);
\draw (-1/4,-1/4) -- (-1/4,-3/4);
\draw (-1/4,1/4)-- (-1/4,3/4);
\draw (1/4,1/4)-- (1/4,3/4);
\draw (1/4,-1/4)-- (1/4,-3/4);
\draw (-9/4,-9/4) -- (-9/4,9/4);
\draw (-9/4,9/4)-- (9/4,9/4);
\draw (9/4,9/4)-- (9/4,-9/4);
\draw (9/4,-9/4)-- (-9/4,-9/4);
\draw (-3/4,-3/4) -- (-9/4,-3/4);
\draw (-3/4,3/4)-- (-9/4,3/4);
\draw (3/4,3/4)-- (3/4,9/4);
\draw (3/4,-3/4)-- (9/4,-3/4);
\draw (-3/4,-3/4) -- (-3/4,-9/4);
\draw (-3/4,3/4)-- (-3/4,9/4);
\draw (3/4,3/4)-- (9/4,3/4);
\draw (3/4,-3/4)-- (3/4,-9/4);
\end{tikzpicture}
}

    Figure 5.1
\end{center}

\end{multicols}

where $|I^k_m|^\frac{1}{n}=3^k|I|^\frac{1}{n}$ and $d(I^k_m,I)\approx 3^k $, and all the implied constants depend only on the dimension, check  Figure \ref{Figure 2.2.}. There exist $k_1,k_2\geq 0$ such that 

$$
P(I,\s)\approx 2\left(\frac{\s(I)}{|I|}+\sum_{k=1}^{k_1}\sum_{m=1}^{3^n-1} \frac{\s(I^k_m)}{3^{kn}|I^k_m|}\right)\approx 2 \sum_{k=k_1}^\infty \sum_{m=1}^{3^n-1} \frac{\s(I^k_m)}{3^{kn}|I^k_m|}
$$
$$
P(I,\o)\approx 2\left(\frac{\o(I)}{|I|}+\sum_{k=1}^{k_2}\sum_{m=1}^{3^n-1} \frac{\o(I^k_m)}{3^{kn}|I^k_m|}\right)\approx 2 \sum_{k=k_2}^\infty \sum_{m=1}^{3^n-1} \frac{\o(I^k_m)}{3^{kn}|I^k_m|}
$$

We can assume without loss of generality that $k_1\leq k_2$. Let $J=\displaystyle I\cup\left(\bigcup_{k=1}^{k_1}\bigcup_{m=1}^{3^n-1}I^k_m\right)$, hence $|J|^\frac{1}{n}\approx 3^{k_1}|I|^\frac{1}{n}$ where again the implied constant depends only on dimension. We calculate

\begin{eqnarray*}
\frac{\s(J)}{|J|}P(J,\o)
&\approx&
\frac{1}{|J|}\left(\s(I)+\sum_{k=1}^{k_1}\sum_{m=1}^{3^n-1}\s(I^k_m)\right)\left(\frac{\o(J)}{|J|}+\sum_{k=1}^\infty\sum_{m=1}^{3^n-1}\frac{\o(J^k_m)}{3^{kn}|J^k_m|}\right)\\
&\approx& 
\frac{1}{3^{k_1n}|I|}\left(\s(I)+\sum_{k=1}^{k_1}\sum_{m=1}^{3^n-1}\s(I^k_m)\right)\left(\frac{\o(J)}{|J|}+\sum_{k=1}^\infty\sum_{m=1}^{3^n-1}\frac{\o(I^{k+k_1}_m)}{3^{kn}|I^{k+k_1}_m|}\right)\\
&\approx& 
\frac{1}{3^{k_1n}|I|}\left(\s(I)+\sum_{k=1}^{k_1}\sum_{m=1}^{3^n-1}\s(I^k_m)\right)\left(\frac{\o(J)}{|J|}+\sum_{k=k_1}^\infty\sum_{m=1}^{3^n-1}\frac{3^{k_1n}\o(I^{k}_m)}{3^{kn}|I^{k}_m|}\right)\\
&\gtrsim& 
\frac{1}{|I|}\left(\s(I)+\sum_{k=1}^{k_1}\sum_{m=1}^{3^n-1}\s(I^k_m)\right)\left(\frac{\o(J)}{|J|}+\sum_{k=k_2}^\infty\sum_{m=1}^{3^n-1}\frac{\o(I^{k}_m)}{3^{kn}|I^{k}_m|}\right)\\
&\gtrsim& 
\left(\frac{\s(I)}{|I|}+\sum_{k=1}^{k_1}\sum_{m=1}^{3^n-1}\frac{\s(I^k_m)}{3^{kn}|I^k_m|}\right)\left(\frac{\o(J)}{|J|}+\sum_{k=k_2}^\infty\sum_{m=1}^{3^n-1}\frac{\o(I^{k}_m)}{3^{kn}|I^{k}_m|}\right)\\
&\approx&
P(I,\s)P(I,\o)
\end{eqnarray*}
hence showing that the one tailed $\A_p$ conditions bound the two tailed $\A_p$ condition and the proof is complete.

\qed
\begin{rem}
From the above construction we see that the same measures could work to prove the same implications for $\A_p^{\text{offset}}$ \eqref{offset A2}, and it's two tailed analogue since it's exactly the nature of the tail that we take advantage of in the construction.
\end{rem}

\subsection{Two weight $\A_p$ equivalence for doubling measures}
\begin{rem}We are going to use $p=2$ in the proof for simplicity. The general case follows immediately since $\frac{1}{p},\frac{1}{p'}<1$ and hence $$P(I,\o)^\frac{1}{p}\approx \left(\sum_{k=1}^\infty\sum_{j=1}^{3^n-1}\frac{\o(I^k_i)}{3^{2kn}|I|}\right)^\frac{1}{p}\leq \sum_{k=1}^\infty\sum_{j=1}^{3^n-1}\left(\frac{\o(I^k_i)}{3^{2kn}|I|}\right)^\frac{1}{p}$$
and from here the proof follows the same way as for $p=2$.
\end{rem}

\textit{Proof of theorem \ref{Ap doubling equivalence theorem}:} Let $\o,\s$ be reverse doubling measures with reverse doubling constants $1+\delta_\o$ and $1+\delta_\s$ respectively. It is enough to prove that we can bound the two tailed $\A^{t_2}_p(\o,\s)$ from the classical $\A_p(\o,\s)$. Let $I$ be a cube. We then have,
\begin{eqnarray*}
P(I,\o)P(I,\s)\!\!\!
&\lesssim&\!\!\!\!
\frac{\o(I)\s(I)}{|I|^2}
+
\frac{\o(I)}{|I|}\sum_{m=1}^\infty\sum_{i=1}^{3^n-1}\frac{\s(I^m_i)}{3^{2mn}|I|}+\frac{\s(I)}{|I|}\sum_{k=1}^\infty\sum_{j=1}^{3^n-1}\frac{\o(I^k_i)}{3^{2kn}|I|}\\
\!\!\!&+&\!\!\!\!
\sum_{m=1}^\infty\sum_{i=1}^{3^n-1}\frac{\s(I^m_i)}{3^{2mn}|I|}\sum_{k=1}^\infty\sum_{j=1}^{3^n-1}\frac{\o(I^k_j)}{3^{2kn}|I|}\equiv A+B+C+D
\end{eqnarray*}
where $|I^m_j|=3^{mn}|I|$, $\dist(I^m_j,I)\approx 3^m|I|^\frac{1}{n}$, $\displaystyle\bigcup_{m \in \N}\bigcup_{j=1}^{3^n-1}I^m_j=\R^n\backslash I$ and the implied constant depends only on dimension. $A$ is bounded immediately by $\A_2(\o,\s)$. For $B$ we have:

$$
B=\sum_{m=1}^\infty\sum_{i=1}^{3^n-1}\frac{\o(I)\s(I^m_i)}{3^{2mn}|I|^2}\lesssim \sum_{m=1}^\infty(1+\delta_\o)^{-m}\sum_{i=1}^{3^n-1}\frac{\o(I^m)\s(I^m)}{|I^m|^2}\lesssim \A_2(\o,\s)<\infty 
$$
where $\displaystyle I^m=I\cup 
\left(\bigcup_{\ell=1}^{m}\bigcup_{j=1}^{3^n-1}I^\ell_j\right)$ and the 
implied constant again depends only on dimension and the reverse doubling 
constant of $\o$. The bound for $C$ is similar to $B$. 

For $D$ we have:
\begin{eqnarray*}
D
&=&
\sum_{m=1}^\infty\sum_{k=1}^m\sum_{i=1}^{3^n-1}\sum_{j=1}^{3^n-1}\frac{\s(I^m_i)\o(I^k_j)}{3^{2mn}|I|3^{2kn}|I|}+\sum_{k=1}^\infty\sum_{m=1}^{k-1}\sum_{j=1}^{3^n-1}\sum_{i=1}^{3^n-1}\frac{\s(I^m_i)\o(I^k_j)}{3^{2mn}|I|3^{2kn}|I|}\\
&\equiv& \mathbf{I}+\mathbf{II}
\end{eqnarray*}
We will get the bound for $\mathbf{I}$, the calculations for $\mathbf{II}$ are identical.
\begin{eqnarray*}
\mathbf{I}&\lesssim&
\sum_{m=1}^\infty\sum_{k=1}^m\sum_{i=1}^{3^n-1}\sum_{j=1}^{3^n-1}(1+\delta_\o)^{k-m}\frac{\s(I^m)\o(I^m)}{3^{2kn}|I^m|^2}\lesssim\\
&\lesssim&
\A_2(\o,\s) \sum_{m=1}^\infty (1+\delta_\o)^{-m}\sum_{k=1}^m\frac{(1+\delta_\o)^k}{3^{2kn}}
\leq C_{n,\s}\A_2(\o,\s) <\infty
\end{eqnarray*}
Combining all the above bounds and getting supremum over the cubes $I$ we get
$$
\A_2^{t_2}(\o,\s)\leq C_{n,\o,\s}\A_2(\o,\s)
$$
which completes the proof of the theorem.\qed

\begin{rem}
The same proof works for the fractional $\A_p(\o,\s)$ conditions as defined in \cite{SSU4}.
\end{rem}

\subsection{The $T_1$ theorem for $A_\infty$ weights.}
The goal of this subsection is to prove theorem \ref{T1 theorem}. For that we are going to use the Sawyer testing condition.
\subsubsection{The Sawyer testing condition.}
Sawyer in \cite{Saw4} proved that the Maximal operator is bounded on $L^p(u)\to L^q(w)$ if and only if the Sawyer testing condition is satisfied, i.e. if and only if
\begin{equation}\label{Sawyer testing}
   S^{p,q}(w,u^{1-p'})=\sup_I\left(\int_Iu(x)^{1-p'}dx\right)^\frac{-1}{p}\left( \int_I \left[M(\mathbf{1}_Iu^{1-p'})(x)\right]^qw(x)dx\right)^\frac{1}{q}<\infty
\end{equation}
where the supremum is taken over all cubes $I \subset \R^n$. Replacing the weights $w,u$ with the measures $\o,\s$ we call $S^{p,q}_d(\o,\s)$ the dyadic Sawyer testing condition where the Maximal operator in \eqref{Sawyer testing} is replaced by the dyadic Maximal operator $M_d$ where the supremum in the operator is taken over only dyadic cubes.

\begin{thm}\label{A infinity and Sawyer testing}\label{Sawyer testing theorem}
($\s \in A_\infty$, $\A_p(\o,\s)\Rightarrow S^{p,p}_d(\o,\s)$) Let $\o,\s$ be Radon measures in $\R^n$ such that $\s \in A_\infty$. If $\o,\s$ satisfy the $\A_p(\o,\s)$ condition then the dyadic Sawyer testing condition $S^{p,p}_d(\o,\s)$ holds.
\end{thm}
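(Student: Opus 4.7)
My plan is to reduce to a standard stopping-time decomposition adapted to $\s$ and then exploit $\s\in A_\infty$ to upgrade a Lebesgue-measure packing estimate to a packing estimate in $\s$-measure. Fixing a cube $I$, I aim to show
$$
\int_I [M_d(\mathbf{1}_I d\s)(x)]^p\, d\o(x)\lesssim \s(I),
$$
uniformly in $I$, with implicit constants depending only on $\A_p(\o,\s)$ and the $A_\infty$ constants of $\s$; taking the supremum in $I$ then yields $S^{p,p}_d(\o,\s)<\infty$. Because dyadic cubes $Q\supsetneq I$ contribute at most $\s(I)/|I|$ to $M_d(\mathbf{1}_I d\s)(x)$, I may replace the maximal function by its localized version $M_d^I\s(x)=\sup\{\s(Q)/|Q|:x\in Q\subseteq I,\ Q\text{ dyadic}\}$.

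I would build a stopping tree $\mathcal{F}$ rooted at $I$: the children of $F\in\mathcal{F}$ are declared to be the maximal dyadic subcubes $F'\subsetneq F$ with $\s(F')/|F'|>K\,\s(F)/|F|$, for a constant $K>1$ chosen at the end. Writing $\mathsf{E}(F)=F\setminus\bigcup_{F'\,\text{child of}\,F}F'$, maximality of the children yields $|\bigcup_{F'}F'|\leq|F|/K$, so the sets $\mathsf{E}(F)$ are pairwise disjoint with $|\mathsf{E}(F)|\geq(1-1/K)|F|$. A standard argument then gives the pointwise estimate $M_d^I\s(x)\leq K\,\s(F(x))/|F(x)|$ on $I$, where $F(x)$ is the smallest element of $\mathcal{F}$ containing $x$; indeed every dyadic $Q\subseteq I$ satisfies $\s(Q)/|Q|\leq K\,\s(\widehat F(Q))/|\widehat F(Q)|$ for the smallest stopping cube $\widehat F(Q)$ containing $Q$, and the ratios $\s(F)/|F|$ grow by a factor exceeding $K$ down any branch of $\mathcal{F}$.

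Splitting the integral over the disjoint sets $\mathsf{E}(F)$ and using the $\A_p$ condition in the form $\o(F)(\s(F)/|F|)^{p-1}\leq\A_p(\o,\s)^p|F|$, each summand is controlled by
$$
\Big(\frac{\s(F)}{|F|}\Big)^p\o(\mathsf{E}(F))\ \leq\ \Big(\frac{\s(F)}{|F|}\Big)^p\o(F)\ \leq\ \A_p(\o,\s)^p\,\s(F),
$$
so the whole matter reduces to proving the Carleson packing bound $\sum_{F\in\mathcal{F}}\s(F)\lesssim\s(I)$.

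The main obstacle is precisely this packing estimate, and this is where $\s\in A_\infty$ enters. With $C,\varepsilon$ the $A_\infty$ constants of $\s$ appearing in \eqref{A infinity}, I would pick $K$ large enough that $CK^{-\varepsilon}\leq 1/2$. Since $F\setminus\mathsf{E}(F)$ has Lebesgue measure at most $|F|/K$ inside $F$, the $A_\infty$ inequality then gives $\s(F\setminus\mathsf{E}(F))\leq\s(F)/2$, hence $\s(\mathsf{E}(F))\geq\s(F)/2$. The disjointness of the $\mathsf{E}(F)$ inside $I$ finally yields $\sum_{F\in\mathcal{F}}\s(F)\leq 2\sum_F\s(\mathsf{E}(F))\leq 2\s(I)$, closing the proof. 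The one subtlety to watch is that $K$ is chosen purely from the $A_\infty$ data of $\s$ and does not depend on $I$, so the pointwise bound on $M_d^I\s$ and the final constant remain uniform.
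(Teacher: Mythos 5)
Your argument is correct and is essentially the paper's own proof: the paper runs the same stopping-time construction with absolute thresholds $K^m$ (maximal dyadic cubes $I^m_j$ on which the $\s$-average exceeds $K^m$), bounds each cube's contribution by $\A_p(\o,\s)^p\,\s(I^m_j)$ exactly as you do, and obtains the packing bound $\sum_{m,j}\s(I^m_j)\lesssim\s(I)$ by the same mechanism you use --- maximality forces the next generation to occupy at most a $2^n/K$ (in your version $1/K$) fraction of the Lebesgue measure of its parent, so the $A_\infty$ property of $\s$ with $K$ large gives a loss of at most $1/2$ per generation. Your relative-threshold stopping tree and the disjoint sets $\mathsf{E}(F)$ each carrying at least half the $\s$-mass are only cosmetic variants of the paper's geometric-decay summation over levels, so there is no substantive difference.
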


\begin{proof}

Let $I$ be a cube in $\R^n$. Let $\Omega_m=\{x\in I:\left(M_d\mathbf{1}_{I}{\s}\right)(x)>K^m\}=\dot{\bigcup} I^m_j$, where $K$ is a constant to be determined later and $I^m_j$ are the maximal, disjoint dyadic cubes such that $\frac{\s(I^m_j)}{|I^m_j|}>K^m$. We have
\begin{eqnarray*}
\int_{I} \left(M_d\mathbf{1}_{I}{\s}\right)^{p}(x)d{\o}(x)
&\lesssim&\!\!\!\!
\sum_{m,j}\left(\frac{\s(I^m_j)}{|I^m_j|}\right)^p\o(I^m_j)\\
&=&\!\!\!\!
\sum_{m,j}\left(\frac{\s(I^m_j)^\frac{1}{p'}\o(I^m_j)^\frac{1}{p}}{|I^m_j|}\right)^p\!\!\!\s(I^m_j)
\leq
\A_p(\o,\s)\sum_{m,j}\s(I^m_j)
\end{eqnarray*}
Call $A^m_t=\bigcup_{I^{m+1}_j\subset I^m_t}I^{m+1}_j$. Since $\s \in A_\infty$ we get 
$$\s\left(A^m_t\right)\leq C\left(\frac{|A^m_t|}{|I^m_t|}\right)^\varepsilon\s(I^m_t)$$
for some $C$ positive and $\varepsilon$ like in \eqref{A infinity}. From the maximality of $I^m_j$ we obtain
$$
\left|A^m_t\right|=\!\!\!\sum_{I^{m+1}_j\subset I^m_t}\left|I^{m+1}_j\right|\leq \frac{1}{K^{m+1}}\s\left(A^m_t\right)\leq\frac{2^n}{K}|I^m_t|
$$
Choose $K$ big enough that $C\left(\frac{|A^m_t|}{|I^m_t|}\right)^\varepsilon\leq\frac{1}{2}$. 
Fix $m\in \N$, $k\geq -m$, then 
$$
\sum_j\s(I^k_j)\leq \left(\frac{1}{2}\right)^{\!\!\!m+k}\sum_j\s(I^{-m}_j)\leq \left(\frac{1}{2}\right)^{\!\!\!m+k}\!\!\!\!\!\!\s(I)
$$
$$
\sum_{k=-m}^\infty\sum_j\s(I^k_j)\leq \sum_{k=-m}^\infty 2^{-m-k}\s(I)\leq 2\s(I)
$$
and by taking $m \to \infty$ we get
$$
\sum_{k,j}\s(I^k_j)=\lim_{m \rightarrow \infty}\sum_{k=-m}^\infty\sum_j\s(I^k_j)\leq 2 \s(I)
$$
and this completes the proof of the theorem.
\end{proof}
With theorem \ref{Sawyer testing theorem}. at hand we get the following corollary.

\begin{cor}\label{A infinity and pivotal corollary}\label{pivotal by Ap}
($\o \in A_\infty$, $\A_p(\o,\s)\Rightarrow \V(\o,\s)^p$) Let $\o,\s$ be Radon measures in $\R^n$ such that $\s \in A_\infty$. Then the $\A_p(\o,\s)$ condition implies the pivotal condition $\V(\o,\s)^p$.
\end{cor}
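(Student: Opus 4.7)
The plan is to combine Theorem \ref{Sawyer testing theorem} with the classical pointwise domination of the Poisson integral by the Hardy--Littlewood maximal function. First I observe that the hypotheses of Theorem \ref{Sawyer testing theorem} (namely $\s\in A_\infty$ and $(\o,\s)\in \A_p$) are insensitive to the choice of dyadic grid, so the dyadic Sawyer testing condition holds for each of the $3^n$ shifted dyadic grids. The standard pointwise identity $Mf\lesssim \sum_{i=1}^{3^n}M_{d_i}f$ therefore upgrades this to the non-dyadic Sawyer testing condition $S^{p,p}(\o,\s)<\infty$, which by Sawyer's two-weight theorem for the maximal operator (\cite{Saw4}) is equivalent to the boundedness
\begin{equation*}
\|M(g\,\s)\|_{L^p(\o)}\lesssim \|g\|_{L^p(\s)}, \qquad g\in L^p(\s).
\end{equation*}

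Next I establish the pointwise bound $P(J,\mathbf{1}_{I_0}\s)\lesssim M(\mathbf{1}_{I_0}\s)(x)$ for every cube $J$ and every $x\in J$. Splitting the defining integral of $P(J,\mathbf{1}_{I_0}\s)$ into the annular regions $J^{(k)}\setminus J^{(k-1)}$, where $J^{(k)}=2^kJ$ is the concentric dilate (with $J^{(-1)}=\emptyset$), the Poisson kernel contributes at most a constant multiple of $|J|^{-1}2^{-2kn}$ on the $k$-th annulus. Since $x\in J\subset J^{(k)}$, the averaging inequality $\s(J^{(k)}\cap I_0)\leq |J^{(k)}|\,M(\mathbf{1}_{I_0}\s)(x)=2^{kn}|J|\,M(\mathbf{1}_{I_0}\s)(x)$ holds, and summing the resulting geometric series in $k$ yields the claimed bound.

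With these two ingredients in hand, the corollary is immediate. For any cube $I_0$ with $\s(I_0)\neq 0$ and any disjoint decomposition $I_0=\bigcup_r I_r$, raising the pointwise bound to the $p$-th power and integrating over $I_r$ against $d\o$ gives $\o(I_r)P(I_r,\mathbf{1}_{I_0}\s)^p \lesssim \int_{I_r}[M(\mathbf{1}_{I_0}\s)]^p\,d\o$. Summing over the disjoint $I_r$ and applying the maximal operator bound from the first step,
\begin{equation*}
\sum_{r\geq 1} \o(I_r)\,P(I_r,\mathbf{1}_{I_0}\s)^p \;\lesssim\; \int_{\R^n} [M(\mathbf{1}_{I_0}\s)]^p\,d\o \;\lesssim\; \s(I_0).
\end{equation*}
Dividing by $\s(I_0)$ and taking the supremum over $I_0$ and all disjoint decompositions gives $\V(\o,\s)^p<\infty$.

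The only step that requires any care is the passage from the dyadic Sawyer testing furnished by Theorem \ref{Sawyer testing theorem} to its non-dyadic counterpart, since the cubes $I_r$ appearing in the pivotal decomposition need not be dyadic. This is handled by the $3^n$-grid argument sketched in the first paragraph; the remainder of the proof reduces to the routine geometric decomposition of the Poisson kernel.
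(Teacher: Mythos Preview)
Your proof is correct and follows the same core strategy as the paper: bound the Poisson integral pointwise by the maximal function, then feed this into the Sawyer-type testing of Theorem~\ref{Sawyer testing theorem}. The paper is more direct---it bounds $P(I_r,\mathbf{1}_{I_0}\s)\lesssim\inf_{x\in I_r}M_d(\mathbf{1}_{I_0}\s)(x)$ with the \emph{dyadic} maximal operator, integrates over $I_r$ against $d\o$, sums over $r$, and applies Theorem~\ref{Sawyer testing theorem} to the cube $I_0$ immediately; there is no passage through shifted grids and no appeal to the full two-weight maximal bound of \cite{Saw4}. Your detour through \cite{Saw4} is harmless but unnecessary: once you have the non-dyadic testing inequality $\int_{I_0}[M(\mathbf{1}_{I_0}\s)]^p\,d\o\lesssim\s(I_0)$, that already suffices, since $\bigcup_r I_r\subset I_0$. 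On the other hand, your explicit treatment of the dyadic-versus-arbitrary-cube issue via the $3^n$ shifted grids is arguably more careful than the paper's one-line estimate $\s((2^m{+}1)I)/|2^mI|\lesssim M_d\s(x)$, which tacitly requires every dilate of $I$ to sit inside a dyadic cube of comparable size.
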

\begin{proof}
Let $I$ be a cube in $\R^n$.
\begin{eqnarray}\label{poisson by maximal bound}
\mathrm{P}(I,{\s})
&=&
\!\!\!\!\int\frac{|I|}{\left(|I|^{\frac{1}{n}}+|x-x_I|\right)^{2n}}d{\s}(x)
\!\lesssim
\sum_{m=0}^\infty \frac{{\s}\big((2^m+1)I\big)}{2^m|2^mI|}\\
\!\!\!\!&\lesssim&\!\!\!\!
\sum_{m=0}^\infty \inf_{x \in I}M_d{\s}(x)2^{-m}
\lesssim
\inf_{x \in I}M_d{\s}(x)\notag
\end{eqnarray}
where $M_d$ denotes the dyadic maximal function.

Let $I_0$ be a cube in $\R^n$. Let $I_0=\bigcup_{r\geq 1}I_r$ be a decomposition of $I_0$ in disjoint cubes. Using \eqref{poisson by maximal bound} we get
$$
\sum_{r \geq 1}{\o}(I_r){\mathrm{P}}^p(I_r,\mathbf{1}_{I_0}{\s})
\leq 
\sum_{r\geq 1}{\o}(I_r) \inf_{x \in I_r}\left(M_d\mathbf{1}_{I_0}{\s}\right)^p(x)
\leq 
\int_{I_0} \left(M_d\mathbf{1}_{I_0}{\s}\right)^p(x)d{\o}(x)
$$
and using theorem \ref{Sawyer testing theorem}. the last expression is bounded by a constant multiple of $\s(I_0)$. So we have 
$$
\sum_{r \geq 1}{\o}(I_r){\mathrm{P}}^p(I_r,\mathbf{1}_{I_0}{\s})
\leq K\s(I_0)
$$
and that completes the proof of the corollary.
\end{proof}

\begin{question}\label{doubling measures question}
$A_\infty$ is a special class of doubling measures. Is it true that for $\o$ doubling measure, $\A_p(\o,\s)\Rightarrow \V(\o,\s)^p$?
\end{question}
\begin{question}
In corollary \ref{A infinity and pivotal corollary} we prove that for $\o\in A_\infty$ dyadic Sawyer testing implies pivotal. Is it true that $S_d^{p,p}(\o,\s)=\V(\o,\s)^p$? 
\end{question}
\begin{rem}
The proof of corollary \ref{pivotal by Ap}, holds also for the fractional $\A_p(\o,\s)$ and pivotal conditions as stated in \cite{SSU4} (stated for $p=2$ but extends immediately to any $p>1$). 
\end{rem} 

\textit{Proof of theorem \ref{T1 theorem}}:
If both the measures $\o, \s$ are in the one weight $A_\infty$, then by corollary \ref{A infinity and pivotal corollary}, the the two weight $\A_2(\o,\s)$ condition implies both the pivotal conditions $\V(\o,\s)^2$ (\eqref{pivotal} and it's dual) and we can apply the main theorem from \cite{SSU4} (or the one in \cite{LW}) to get the result. \qed

\subsection{The ``buffer" conditions do not imply the tailed $\A_p$ conditions.}

The goal of this subsection is to give a proof of theorem \ref{non doubling pivotal example}. First we make the following simple remark.

\begin{rem}\label{pivotal implies classical Ap}
It is immediate to see that the Pivotal condition implies the classical $\A_p$ condition. Just let the decomposition in \eqref{pivotal} be just a single cube.
\end{rem}

\begin{rem}
We are going to use $p=2$ in the proof for simplicity. The proof works for $1<p\leq 2$, without any modifications.
\end{rem}

\textit{Proof of theorem \ref{non doubling pivotal example}:} We construct measures $\o$ and $\s$ so that the pivotal condition  $\V^2$ \eqref{pivotal} holds, but $\A_2^{t_1}$ (\ref{one tailed}) does not. Let 
$$
\displaystyle\o(E)=\delta_0,\quad \s(E)=\sum_{n=2}^\infty n\delta_n(E)
$$
where $\delta_n$ denotes the point mass at $x=n$. First we check $\A_2^{t_1}$ does not hold. Let $I=[0,1]$. Then

$$\frac{\o(I)}{|I|}P(I,\s)=\int_\R \frac{1}{(1+dist(x,[0,1]))^2}\s (dx)=\sum_{n=2}^\infty \frac{n}{(n)^2}=\infty$$

To show the pivotal condition holds, let $I_0=(a,b)$ where $a<0$ and $n \leq b <n+1, n\geq 2$ (we need $I_0$ to contain some masses from $\s$ and $0 \in I_0$ for otherwise there is nothing to prove). Decomposing $I_0=\dot{\cup}I_r$, only the $I_r$ such that $0 \in I_r$ contributes to the pivotal condition. Call that cube $I_1$. We consider the cases:

\begin{enumerate} [(i)]
    \item $|I_1| \leq 1$. We calculate:
    
\begin{eqnarray*}
    \frac{\o(I_1) P(I_1,I_0\s)^2}{\s(I_0)}=\frac{\displaystyle\left(\int_{I_0} \frac{|I_1|\s (dx)}{(|I_1|+\dist(x,I_1)))^2}\right)^2}{\s(I_0)}
    \!\!\!&\leq& \!\!\!
    \displaystyle|I_1|^2\left(\sum_{k=2}^{n}\frac{k}{k^2}\right)^2\Bigg/\displaystyle\sum_{k=2}^{n}k\\
    \!\!\!&\leq&\!\!\!
    M<\infty
\end{eqnarray*}
    where the constant $M$ does not depend on $n$.
    
    \item $|I_1| \geq n$. We get:

\begin{eqnarray*}
    \frac{\o(I_1) P(I_1,I_0\s)^2}{\s(I_0)}=\frac{\displaystyle\left(\int_{I_0} \frac{|I_1|\s (dx)}{(|I_1|+\dist(x,I_1)))^2}\right)^2}{\s(I_0)}
    \!\!\!&\leq&\!\!\!
    \displaystyle\displaystyle|I_1|^2\left(\sum_{k=2}^{n}\frac{k}{|I_1|^2}\right)^2
    \!\!\! \Bigg/\displaystyle\sum_{k=2}^{n}k\\
    \!\!\!&\leq& \!\!\!
    \frac{n^2}{|I_1|^2}\leq 1
\end{eqnarray*}
    
    \item $1\leq |I_1|\leq n$. We have:
    
    \begin{eqnarray*}
    \frac{\o(I_1) P(I_1,I_0\s)^2}{\s(I_0)}\lesssim \frac{|I_1|^2}{{n}^2}\Bigg(\sum_{k=2}^{|I_1|}\frac{k}{|I_1|^2}+\sum_{k=|I_1|}^{n}\frac{1}{k}\Bigg)^2
    &\lesssim&
    \frac{|I_1|^2}{{n}^2}\bigg(1+\log\Big(\frac{n}{|I_1|}\Big)\bigg)^2\\
    &\lesssim&
    \frac{|I_1|^2}{{n}^2}+\frac{|I_1|^2}{{n}^2}\log^2\Big(\frac{n}{|I_1|}\Big)
    \end{eqnarray*}
\end{enumerate}
Now, on the last expression setting $x=\frac{{n}}{|I_1|}$ we get the function $f(x)=\frac{\log^2x}{x^2}$, $x\geq 1$ which is bounded independent of $n$.
Combining all three cases we see that the pivotal condition is bounded.\qed 

\begin{question}
In the above example, one can check that the dual pivotal condition does not hold. Is it true that $\V(\s,\o)^p\cap \V(\o,\s)^p \Rightarrow \A_p^{t_1}(\o,\s)$?
\end{question}

\subsubsection{Doubling measures and the Pivotal condition.}The result in this subsection is essentially in \cite{Saw2}, equation (4.4), but we include it for completeness. We partially answer positively \textbf{question \ref{doubling measures question}}.

If the measures $\o,\s$ are doubling but not in $\A_\infty$ then we do not in general know if the Pivotal condition can be controlled by the $\A_p(\o,\s)$ condition. For measures with small doubling constant though the $\A_p(\o,\s)$ condition implies $\V(\o,\s)^p$.

\begin{thm}\label{Ap and small doubling corollary}
(Small doubling+$\A_p(\o,\s)\Rightarrow \V(\o,\s)^p$) Let $\o, \s$ be doubling measures in $\R^n$ with doubling constants $K_\o,K_\s$ and reverse doubling constants $1+\delta_\o,1+\delta_\s$ respectively. If $K_\s<2^p(1+\delta_\o)$ then the $\A_p(\o,\s)$ condition implies the pivotal condition $\V(\o,\s)^p$.
\end{thm}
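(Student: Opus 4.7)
The plan is to expand the Poisson integral into a geometric series of dyadic shell averages, apply H\"older's inequality to handle the $p$-th power, invoke the $\A_p(\o,\s)$ relation, and then collapse the resulting double sum using the reverse doubling of $\o$ and the doubling of $\s$. The threshold $K_\s<2^p(1+\delta_\o)$ will emerge as the condition making a single explicit geometric series converge.

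First, fix a decomposition $I_0=\bigcup_r I_r$ into pairwise disjoint cubes. Following the dyadic estimate from \eqref{poisson by maximal bound},
$$P(I_r,\mathbf{1}_{I_0}\s)\lesssim \sum_{m=0}^{\infty}\frac{\s((2^m+1)I_r\cap I_0)}{2^m\,|2^m I_r|}=:\sum_{m=0}^{\infty}2^{-m}a_m.$$
For any $\beta\in(0,1)$, splitting $2^{-m}=2^{-m(1-\beta)}\cdot 2^{-m\beta}$ and applying H\"older's inequality yields
$$P(I_r,\mathbf{1}_{I_0}\s)^p\lesssim_{p,\beta}\sum_{m=0}^{\infty}2^{-m\beta p}a_m^p.$$
The $\A_p(\o,\s)$ condition rewrites as $(\s(J)/|J|)^p\leq \A_p(\o,\s)^p\,\s(J)/\o(J)$ for every cube $J$ with $\o(J)>0$, and comparing $(2^m+1)I_r$ with $2^m I_r$ by doubling of $\s$ gives
$$a_m^p\lesssim \A_p(\o,\s)^p\,\frac{\s(2^m I_r)}{\o(2^m I_r)}.$$

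Next I would multiply by $\o(I_r)$ and iterate Lemma \ref{reverse doubling implies doubling} for $\o$ to obtain $\o(I_r)/\o(2^m I_r)\leq (1+\delta_\o)^{-m}$. Since the $I_r$ are pairwise disjoint with $\bigcup_r I_r=I_0$, doubling of $\s$ gives $\s(2^m I_r)\leq K_\s^m\s(I_r)$, and summing in $r$ yields $\sum_r\s(2^m I_r)\leq K_\s^m\s(I_0)$. Combining these estimates and swapping the order of summation gives
$$\sum_{r}\o(I_r)\,P(I_r,\mathbf{1}_{I_0}\s)^p\lesssim \A_p(\o,\s)^p\,\s(I_0)\sum_{m=0}^{\infty}\left(\frac{K_\s}{2^{\beta p}(1+\delta_\o)}\right)^m.$$
This geometric series converges whenever $K_\s<2^{\beta p}(1+\delta_\o)$, and since $\beta\in(0,1)$ is arbitrary, the hypothesis $K_\s<2^p(1+\delta_\o)$ is exactly what is needed to bound $\V(\o,\s)^p$ by a constant multiple of $\A_p(\o,\s)^p$.

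The delicate step is the H\"older splitting: the naive bound $(\sum a_m)^p\leq(\sum 1)^{p-1}\sum a_m^p$ transfers only a single factor of $2^{-m}$ onto $a_m^p$ and would force the far more restrictive condition $K_\s<2(1+\delta_\o)$. The free parameter $\beta$ moves almost all of the Poisson decay into the $p$-th power term, and this maneuver is what produces the sharp exponent $2^p$ in the statement.
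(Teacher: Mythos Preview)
Your proof is correct and follows the same overall strategy as the paper: expand the Poisson integral over dyadic dilates of $I_r$, apply $\A_p$ at each scale, and use reverse doubling of $\o$ together with doubling of $\s$ to produce a convergent geometric series summing to $\s(I_0)$. The one genuine difference is in how the $p$-th power of the dyadic sum is handled. You invoke H\"older with a free parameter $\beta\in(0,1)$ to linearize $\big(\sum_m 2^{-m}a_m\big)^p$ into $C_{p,\beta}\sum_m 2^{-m\beta p}a_m^p$, and then let $\beta\uparrow 1$ to recover the threshold $2^p$. The paper never linearizes: it simply writes $\o(I_r)\big(\sum_m c_m\big)^p=\big(\sum_m \o(I_r)^{1/p}c_m\big)^p$, factors $\s(I_r^m)=\s(I_r^m)^{1/p'}\s(I_r^m)^{1/p}$ so that $\A_p$ absorbs the first factor while the second is bounded by $K_\s^{m/p}\s(I_r)^{1/p}$, and pulls $\s(I_r)^{1/p}$ outside. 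What remains inside the $p$-th power is already the geometric series $\sum_m\big(K_\s/(2^p(1+\delta_\o))\big)^{m/p}$, converging precisely when $K_\s<2^p(1+\delta_\o)$, with no auxiliary parameter. Your H\"older trick reaches the same threshold in the limit; the paper's factorization hits it directly and makes your ``delicate step'' unnecessary.
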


\begin{proof}
Let $I_0$ be a cube in $\R^n$ and $I_0=\cup_{r \geq 1}I_r$ be a decomposition of $I_0$ in disjoint cubes.

\begin{eqnarray*}
&&\sum_{r\geq 1}\o(I_r)P(I_r,I_0\s)^p
\approx
\sum_{r \geq 1}\o(I_r)\left(\sum_{m=1}^{m_r}\frac{\s(I^m_r)}{2^m|I^m_r|}\right)^p\\
&\leq&
\sum_{r\geq 1}\left(\sum_{m=1}^{m_r}(1+\delta_\o)^{-\frac{m}{p}}\frac{\o^\frac{1}{p}(I^m_r)\s^\frac{1}{p'}(I^m_r)}{2^m|I^m_r|}\s^\frac{1}{p}(I^m_r)\right)^p\\
&\leq&
\A_p(\o,\s)\sum_{r \geq 1}\s(I_r)\left(\sum_{m=1}^{m_r}\left(\frac{K_\s}{2^p(1+\delta_\o)}\right)^\frac{m}{p}\right)^p\lesssim \A_p(\o,\s)\s(I_0)
\end{eqnarray*}
where $m_r=\log_2 \left(\frac{|I_0|}{|I_r|}\right)^\frac{1}{n}$, $I^m_r$ is the cube with same center as that of $I_r$ and $|I^m_r|^\frac{1}{n}=2^m|I_r|^\frac{1}{n}$. where the implied constant depends only on the doubling constant of $\s$ and the reverse doubling constant of $\o$. This completes the proof of the theorem.
\end{proof}
\begin{rem}
For a doubling measure $\o$ and a cube $I\subset\R^n$ we have that in \eqref{energy gain} $E(I,\o)^2\geq c_\o>0$ since $\o(I_1)\approx \o(I_{2})$ where $|I_1|=|I_{2}|=2^{-n}|I|$ and $I_1$ is in the top left corner of $I$, $I_{2}$ in the bottom right corner of $I$. Hence for $\o$ doubling the Pivotal condition $\V(\o,\s)^p$ is equivalent to the Energy condition $\E(\o,\s)^p$.
\end{rem}

\end{document}